\documentclass[a4paper]{article}

\usepackage[english]{babel} \usepackage[utf8]{inputenc} \usepackage{amsmath} \usepackage{graphicx} \usepackage{amssymb} \usepackage{amsthm} \usepackage{mathtools} \usepackage{tikz-cd} \usepackage{mathrsfs} \usepackage[colorinlistoftodos]{todonotes} \usepackage{enumitem} \usepackage{dsfont}

\mathtoolsset{showonlyrefs}
\title{Linear completeness of trajectories in Sobolev spaces and the symmetrised polydisk} \author{Lyonell Boulton and Connor Evans} \date{19th April 2026}

\newtheorem{thm}{Theorem}[section] \newtheorem{lem}[thm]{Lemma}     \newtheorem{cor}[thm]{Corollary}  \newtheorem{rmk}[thm]{Remark} 

     \newcommand{\C}{\mathbb{C}}

\begin{document}

\maketitle

\begin{abstract}
We establish a framework to determine the linear completeness of families of non-linear trajectories in Hilbert spaces, which relies on an infinite analytic block Toeplitz operator formulation. By means of this approach, we show the linear completeness in Sobolev spaces of two families of classical functions.  One is the moving family of dilated Weierstrass functions. The other is the family of eigenfunctions of the Gross-Pitaevskii equation with trapping potential in an infinite square well. Our results provide a new insight on the formulation of general methods to examine this intriguing concept, bridging classical non-linear analysis and linear approximation theory.
\end{abstract}

 \section{Introduction}

Characterisations for the dilations of a fixed function to be a Riesz basis of $\mathrm{H}^0=\mathrm{L}^2(0,1)$, in terms of a symbol represented by a Dirichlet series, were obtained about 30 years ago by Hedenmalm, Lindqvist and Seip. Indeed, by virtue of \cite[Theorem~5.2]{HLS}, we know that the family is a Riesz basis if and only if all the zeros and poles of the symbol lie on the left hand side plane, away from the imaginary axis. 

In order to answer a question about the linear completeness of non-linear trajectories posed by Fraenkel in \cite{Fraenkel} about 40 years ago, we recently considered the operator formulation of these basis properties for dilations of a family of moving functions. See \cite{boulton2023completeness}.  Succinctly, we showed that the underlying function profile can be changed, as the mode of vibration increases, while the family remains a Riesz basis of $\mathrm{H}^0$ far away from the linear setting where it originated. This is an interesting extension of the framework of \cite{HLS} which deserves further investigation. 

Determining that the zeros and poles of a symbol lie away from the critical line is usually very difficult. In \cite{BoultonLord} and \cite{boulton_2019}, we showed that a ``hands on'' formulation of the basis question via infinite block Toeplitz operators associated to multiplicative shifts, coupled with a simple Neumann series argument, enables verification of the Riesz basis property in specific cases.  Our first goal below is to show that this infinite block Toeplitz formulation is also effective for determining completeness properties of non-linear ``spread'' trajectories of functions in general Hilbert spaces and that this, in turn, can be used to show the linear completeness of large perturbations of these trajectories.

This general framework is presented in sections~\ref{sec1} and \ref{sec2}. Our first key contribution is Lemma~\ref{theorem1}, where we give necessary and sufficient conditions for the invertibility of a multiplicative shift Toeplitz operator with infinite diagonal periodic analytic symbol. Our second key contribution is Corollary~\ref{BetterthanCristensen}, which establishes a sufficient condition for the invertibility of its perturbations. By virtue of this corollary and a suitable representation via the symmetrised polydisc, found in Corollary~\ref{cor2}, in Section~\ref{sec2} we re-formulate a Paley-Wiener type perturbative criterion of Christensen found in \cite{Ch1995} (or \cite[Theorem~15.1.1]{Christensen}), adapting it to the context of trajectories of dilated functions.  The remainder of the paper is then devoted to the sharpening of the thresholds for completeness of two moving families of classical functions by the implementation of this re-formulation.

Section~\ref{sec3} addresses our second goal. We give a fairly complete characterisation of the basis question in the Sobolev spaces $\mathrm{H}^\alpha=\mathrm{H}^\alpha(0,1)$, where $\alpha\geq 0$,  for families of dilated Weierstrass functions. Our main conclusion is formulated in Theorem~\ref{theorem4.1}, where we show that the family always renders a Riesz basis in a regime up to a threshold. Moreover, it continues to do so, when subject to eventual periodicity of the underlying sequence. Notably, we show that a family comprising Weierstrass functions with different degrees of regularity, where each member has a certain period, forms a Riesz basis of $\mathrm{H}^\alpha$ for all $\alpha$ below the lowest degree of regularity among the members of this family. No previous investigation seems to exist in this direction. 

Our third goal, pursued in Section~\ref{sec4}, is to improve the currently known thresholds of linear completeness in $\mathrm{H}^\alpha$ for the eigenfunctions of the Gross-Pitaevskii equation, subject to Dirichlet boundary conditions on a segment. This equation models the stationary states of a Bose-Einstein condensate in a 1D square well trapping potential. In Theorem~\ref{oneterm} we find regions for the family to be a basis, in terms of the solution to a transcendental equation involving Lambert series and the regularity degree $\alpha\geq 0$. Then, in Theorem~\ref{twoterm}, we show that the regime is not optimal and improve it further, subject to periodicity.

Due its natural link with the theory of Hilbert space models, we include in Appendix~\ref{appa} a precise formulation of part of our framework in terms of the Schur-Cohn criterion of Pt\'{a}k and Young, \cite{ptakyoung}.


\section{Spread bases and the symmetrised polydisc\label{sec1}} We denote the open disk by $\mathbb{D}=\{z\in\mathbb{C}:|z|<1\}$ and the $d$-polydisc by $\mathbb{D}^d=\mathbb{D}\times\cdots\times \mathbb{D}$ with its standard Cartesian product norm. Let $\underline{\lambda}=(\lambda_{1},\hdots,\lambda_{d})\in\mathbb{D}^{d}$. Let $\pi_{d} :\mathbb{D}^{d}\rightarrow \mathbb{C}^{d}$ be given by $$\pi_{d}(\underline{\lambda})=\big(\pi_{d,1}(\underline{\lambda}),\hdots,\pi_{d,d}(\underline{\lambda})\big)\quad \text{where} \quad \pi_{d,k}(\underline{\lambda})=\sum_{1\leq j_{1}<\hdots<j_{k}\leq d} \lambda_{j_{1}}\cdots \lambda_{j_{k}}.$$ Below, the \emph{symmetrised polydisc} is the image $ \mathbb{G}_{d}=\pi_{d}\big(\mathbb{D}^{d}\big).$ Since $\pi_d$ is analytic in each component, then $\mathbb{G}_d$ is an open set and its closure $\operatorname{cl}(\mathbb{G}_d)=\pi_{d}\Big(\operatorname{cl}(\mathbb{D}^{d})\Big)$.  

We will write $\pi_{d,0}(\underline{\lambda})=1$ for all $d\in\mathbb{N}$ and use the shorthand $a_k,\,b_k$ or $c_k$ instead of $\pi_{d,k}(\underline{\lambda})$, suppressing the explicit reference to the coefficients $d$ or $\underline{\lambda}$, when the context allows it.  By construction, the scalar polynomial \[P(z)=\sum_{k=0}^{d}\pi_{d,d-k}(\underline{\lambda})z^{k}\] has the factorisation \begin{equation}\label{polynomialwithzerosinD1} P(z)=\prod_{j=1}^{d}(z+\lambda_{j}).   \end{equation} Moreover, the \emph{conjugate polynomial} \[Q(z)=z^d\overline{P(1/\overline{z})}=\sum_{k=0}^{d}\overline{\pi_{d,k}(\underline{\lambda})}z^{k}\] has factorisation $$Q(z)=\prod_{j=1}^{d} (1+\overline{\lambda_{j}}z).$$ Identifying $Q$ with $\alpha$, we therefore have that, a polynomial $\alpha(z)=1+\sum_{k=1}^da_kz^k$ has all its roots in $\mathbb{C}\setminus \operatorname{cl}(\mathbb{D})$ if and only if $\underline{a}\in\mathbb{G}_{d}$.  
 
In order to support the arguments given in section~\ref{sec3}, it might be convenient to visualise the subsets $\mathbb{G}_d\cap \mathbb{R}^d$ for $d=2,\,3$. For this purpose, see the figure~\ref{fig1}. We will provide a full characterisation of $\mathbb{G}_d$ in terms of the so-called Pt\'{a}k-Young matrices in appendix~\ref{appa}.

\begin{figure} \includegraphics[width=0.45\textwidth]{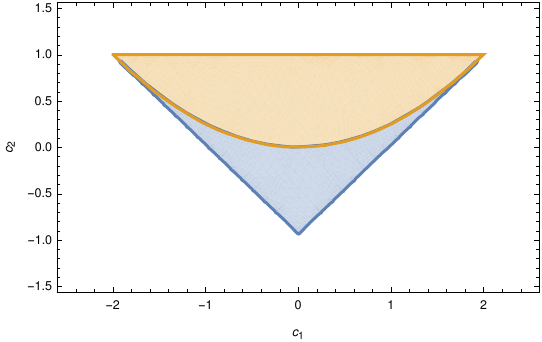} \hspace{1cm} \includegraphics[width=0.42\textwidth]{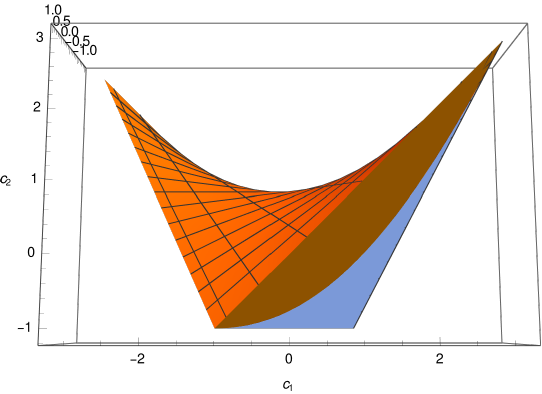} \caption{Subset $\mathbb{G}_d\cap \mathbb{R}^d$ projected onto $\mathbb{R}^d$. \underline{Left} $d=2$. For $(\lambda_1,\lambda_2)\in\mathbb{C}^2$ such that $\max\{|\lambda_1|,|\lambda_2|\}\leq 1$ we know $\pi_{2}(\underline{\lambda})=(\lambda_1+\lambda_2,\lambda_1\lambda_2)$. The condition $\pi_{2}(\underline{\lambda})\in\mathbb{R}^2$, implies that $\operatorname{Im}\lambda_1=-\operatorname{Im} \lambda_2$ and $\operatorname{Im}\lambda_1\operatorname{Im} \lambda_2=-\operatorname{Im}\lambda_2\operatorname{Im} \lambda_1$. Then, the projection has two components: two real roots (blue) comprising $\pi_2\big(\mathbb{D}^2\cap\mathbb{R}^2\big)$ and a conjugate pair roots (orange) comprising the $\underline{\lambda}\in\mathbb{D}^2$ with $\lambda_1=\overline{\lambda_2}=x+iy$ for reals $x^2+y^2\leq 1$ mapped by $h(x,y)=\big(2x,x^2+y^2\big)$. \underline{Right} $d=3$. In this case there are also two components forming the projection: (blue) three real roots and (orange) one real root and one conjugate pair. \label{fig1}} \end{figure}

\medskip

Let $\{e_n\}_{n=1}^\infty$ be an orthonormal basis of a separable Hilbert space $\mathcal{H}$ over the complex field. Let $M_j:\mathcal{H}\longrightarrow \mathcal{H}$ be such that $M_je_n=e_{nj}$. That is $M_j$ are \emph{spread} or \emph{multiplicative} shifts on $\mathcal{H}$ by a factor $j\in\mathbb{N}$. Below we consider the framework of Hardy classes with respect to these shift operators as formulated in \cite{RR}. 

We employ the following convention for diagonal operators on $\mathcal{H}$ with respect to $\{e_n\}_{n=1}^\infty$. For a scalar sequence $\{a(n)\}_{n=1}^\infty\subset \mathbb{C}$ such that \[\sup_{n\in\mathbb{N}}|a(n)|<\infty,\] the \emph{diagonal operator} $A:\mathcal{H}\longrightarrow \mathcal{H}$, denoted as \[A=\operatorname{diag}[a(n)]_{n=1}^\infty,\] is the bounded operator such that $Ae_n=a(n)e_n$. It is readily seen that \[\|A\|=\sup_{n\in\mathbb{N}}|a(n)|.\] Here and elsewhere below, whenever the context makes it unambiguous, we will write $\|B\|=\|B\|_{\mathcal{B}(\mathcal{H})}$ for a bounded linear operator $B$.

Let $\{A_k\}_{k=1}^\infty\subset \mathcal{B}(\mathcal{H})$ be a sequence of diagonal operators, where each \[A_k=\operatorname{diag}[a_k(n)]_{n=1}^\infty.\] Assume that there is an analytic scalar function $\alpha_n:\mathbb{D}\longrightarrow \mathbb{C}$ for each $n\in\mathbb{N}$, with \[\alpha_n(z)=1+\sum_{k=1}^\infty a_k(n)z^k \qquad \text{for }z\in\mathbb{D},\] such that \begin{equation} \label{eq3} \sup_{\substack{z\in\mathbb{D}\\n\in\mathbb{N}}} |\alpha_n(z)|<\infty.\end{equation} Then,  \begin{equation} \label{eq5}\mathbb{A}(z)=I+\sum_{k=1}^\infty A_k z^k=\mathrm{diag}[\alpha_n(z)]_{n=1}^\infty\end{equation} is an analytic family of bounded operators, $\mathbb{A}:\mathbb{D}\longrightarrow \mathcal{B}(\mathcal{H})$. Moreover, $\mathbb{A}\in H^{\infty}\big(\mathbb{D};\mathcal{B}(\mathcal{H})\big)$ with \[\|\mathbb{A}\|_{H^{\infty}(\mathbb{D};\mathcal{B}(\mathcal{H}))}=\sup_{z\in\mathbb{D}}\|\mathbb{A}(z)\|_{\mathcal{B}(\mathcal{H})}=\sup_{\substack{z\in\mathbb{D}\\n\in\mathbb{N}}} |\alpha_n(z)|.\]  Here and elsewhere, $H^{q}\big(\mathbb{D};\mathcal{B}(\mathcal{H})\big)=H^q_{\mathcal{B}(\mathcal{H})}(\mathbb{D})$ where the latter is the standard in \cite{RR}. 

\medskip

Let $p\in\mathbb{N}$ and let bounded $A=\operatorname{diag}[a(n)]_{n=1}^\infty$. It is readily seen that \begin{equation} \label{eq1} AM_p=M_pA \quad \iff \quad a(pn)=a(n)\text{ for all } n\in\mathbb{N}.\end{equation} Indeed, note that $AM_pe_n=a(pn)e_{pn}$ and that $M_pAe_n=a(n)e_{pn}$. This elementary observation will play a significant r\^{o}le below. Note that \eqref{eq1} holds true if and only if, it holds true simultaneously for all prime factors of $p$. 

Let \[\mathcal{L}_0=\mathrm{Ker}(M_p^*)=\mathrm{Span}\{e_j:j\not\in p\mathbb{N}\}\] be the \emph{wandering space associated to} $M_p$. If each of the operator coefficients $A_k$ defining the bounded analytic family $\mathbb{A}$ as above also satisfy \eqref{eq1}, then $\mathcal{L}_0$ is an invariant subspace for this family, $\mathbb{A}:\mathbb{D}\longrightarrow\mathcal{B}(\mathcal{L}_0)$, and $\mathbb{A}\in H^{\infty}\big(\mathbb{D};\mathcal{B}(\mathcal{L}_0)\big)$ with \[\|\mathbb{A}\|_{H^{\infty}(\mathbb{D};\mathcal{B}(\mathcal{L}_0))}=\sup_{\substack{z\in\mathbb{D}\\n\in\mathbb{N}}} |\alpha_n(z)|.\] Indeed, for the latter, note that $\|A\|_{\mathcal{B}(\mathcal{L}_0)}=\|A\|_{\mathcal{B}(\mathcal{H})}$ for any diagonal operator $A$ satisfying \eqref{eq1}.  

Below we write $\mathfrak{A}:H^2(\mathbb{D};\mathcal{L}_0)\longrightarrow H^2(\mathbb{D};\mathcal{L}_0)$ to denote the associated operator of multiplication, $(\mathfrak{A} \Phi)(z)=\mathbb{A}(z)\Phi(z)$, for analytic vectors $\Phi:\mathbb{D}\longrightarrow\mathcal{L}_0$ belonging to the Hardy class $H^2(\mathbb{D};\mathcal{L}_0)$. Recall  the fundamental identity \cite[section~1.15 (Theorem~B)]{RR},  \[\|\mathfrak{A}\|_{\mathcal{B}(H^2(\mathbb{D};\mathcal{L}_0))}=\|\mathbb{A}\|_{H^{\infty}(\mathbb{D};\mathcal{B}(\mathcal{L}_0))}.\]

Let $\mathcal{L}_k=M_{p^k}\mathcal{L}_0=M_p^k \mathcal{L}_0=M_p\mathcal{L}_{k-1}$, so that \[\mathcal{H}=\bigoplus_{k=0}^\infty \mathcal{L}_k.\] Let the infinite block representation of the linear operator \begin{equation} \label{eq2} T=I+\sum_{k=1}^\infty A_kM_{p^k},\end{equation} in this orthogonal decomposition, be denoted by \[T(\mathbb{A}):\bigoplus_{k=0}^\infty \mathcal{L}_k\longrightarrow \bigoplus_{k=0}^\infty \mathcal{L}_k.\] By virtue of \eqref{eq3} and \eqref{eq1}, $T$ is a Toeplitz operator relative to $M_p$ \cite[section~5.2 (Theorem~C)]{RR} and \[\|T\|_{\mathcal{B}(\mathcal{H})}=\|T(\mathbb{A})\|_{\mathcal{B}(\bigoplus_{k=0}^\infty\mathcal{L}_k)}  =\|\mathfrak{A}\|_{\mathcal{B}(H^2(\mathbb{D};\mathcal{L}_0))}. \] Therefore, \[\|T\|_{\mathcal{B}(\mathcal{H})} = \sup_{\substack{z\in\mathbb{D}\\n\in\mathbb{N}}} |\alpha_n(z)|\] and, as we shall see, the next invertibility criterion is valid.

\begin{lem}\label{theorem1} Let $p\in\mathbb{N}$. Let $\{A_k\}_{k=1}^\infty\in\mathcal{B}(\mathcal{H})$ be a sequence of diagonal operators whose entries satisfy \eqref{eq3} and \eqref{eq1}. Assume that the associated family of bounded operators \eqref{eq5} is analytic. Then, the operator $T$ in the representation \eqref{eq2} is invertible, if and only if \[s(T)=\inf_{\substack{z\in\mathbb{D}\\n\in\mathbb{N}}} |\alpha_n(z)| >0.\] Moreover, $\|T^{-1}\|=\frac{1}{s(T)}.$\end{lem}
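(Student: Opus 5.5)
The plan is to reduce everything to the multiplication operator $\mathfrak{A}$ on $H^2(\mathbb{D};\mathcal{L}_0)$, using the unitary identification of $\mathcal{H}=\bigoplus_{k}\mathcal{L}_k$ with $H^2(\mathbb{D};\mathcal{L}_0)$. This identification is $U:\sum_k M_p^k x_k\mapsto \sum_k x_k z^k$ for $x_k\in\mathcal{L}_0$. By \eqref{eq1} each $A_k$ commutes with $M_p$ and leaves $\mathcal{L}_0$ invariant. Hence $U A_k M_{p^k} U^{*}$ is multiplication by $z^k A_k|_{\mathcal{L}_0}$, and therefore $UTU^*=\mathfrak{A}$. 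This is just the Toeplitz (in fact analytic, lower triangular) structure already recorded before the statement. So $T$ is invertible iff $\mathfrak{A}$ is, with equal inverse norms.

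\emph{Sufficiency.} Suppose $s(T)>0$. Then each $\alpha_n$ is bounded away from $0$ on $\mathbb{D}$, uniformly in $n$. So $\beta_n=1/\alpha_n$ is analytic on $\mathbb{D}$ with $\sup_{z,n}|\beta_n(z)|=1/s(T)$, and $\beta_n(0)=1$. Expand $\beta_n(z)=1+\sum_k b_k(n)z^k$ and set $B_k=\operatorname{diag}[b_k(n)]$. The coefficients satisfy $b_k(pn)=b_k(n)$, because $\alpha_{pn}=\alpha_n$ by \eqref{eq1} and hence $\beta_{pn}=\beta_n$. Cauchy estimates show the $B_k$ are bounded. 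Then $\mathbb{B}(z)=\operatorname{diag}[\beta_n(z)]$ is in $H^\infty(\mathbb{D};\mathcal{B}(\mathcal{L}_0))$ with norm $1/s(T)$, and $\mathbb{A}\mathbb{B}=\mathbb{B}\mathbb{A}=I$ pointwise. Multiplication operators compose, so $\mathfrak{B}=\mathfrak{A}^{-1}$. By the fundamental identity, $\|T^{-1}\|=\|\mathfrak{B}\|=\sup|\beta_n|=1/s(T)$.

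\emph{Necessity.} Suppose $T$ is invertible, and let $S=T^{-1}$. Since $T$ commutes with $M_p$, so does $S$: from $TM_p=M_pT$ we get $M_pS=SM_p$. The step I expect to be the main obstacle is showing that $S$ is again analytic Toeplitz with diagonal coefficients, so that it is multiplication by some $\mathbb{B}\in H^\infty$. I would use the commutant lifting fact from \cite{RR}: an operator commuting with the shift on $H^2(\mathbb{D};\mathcal{L}_0)$ is multiplication by a bounded analytic $\mathcal{B}(\mathcal{L}_0)$-valued function $\mathbb{B}$.

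Next I would show $\mathbb{B}(z)$ is diagonal. All diagonal operators $D$ satisfying \eqref{eq1} commute with $T$, hence with $S$. Writing $S=\sum_k B_kM_{p^k}$, each $B_k$ then commutes with all such $D$. These $D$ separate the "orbit classes" $\{n,pn,p^2n,\dots\}$, so $B_k$ is diagonal on $\mathcal{L}_0$. Therefore $\mathbb{B}(z)=\operatorname{diag}[\beta_n(z)]$, and from $\mathbb{A}\mathbb{B}=I$ we obtain $\alpha_n\beta_n\equiv1$. Consequently $|\alpha_n(z)|\ge 1/\|\mathbb{B}\|_\infty=1/\|S\|>0$, which gives $s(T)>0$.

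An alternative, more hands-on route to necessity avoids commutant lifting. Test $\mathfrak{A}^*$ against normalized Szeg\H{o} kernels $k_w e_n$, $w\in\mathbb{D}$. One has $\mathfrak{A}^*(k_w e_n)=\overline{\alpha_n(w)}\,k_w e_n$, so $\|\mathfrak{A}^*\|$ bounded below forces $|\alpha_n(w)|\ge \|T^{-1}\|^{-1}$. I would actually prefer this route, since it needs only the diagonal structure of $\mathbb{A}$. Combining it with sufficiency yields both directions and the norm equality $\|T^{-1}\|=1/s(T)$.
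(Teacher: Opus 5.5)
Your argument is correct. The sufficiency half is the paper's own proof. You form $\mathbb{B}(z)=\operatorname{diag}[1/\alpha_n(z)]$, observe it lies in $H^\infty(\mathbb{D};\mathcal{B}(\mathcal{L}_0))$ with norm $1/s(T)$, and transfer to $T$ through the fundamental identity. The paper obtains the coefficients $B_k$ from the formal power-series recursion \eqref{invexp} rather than by expanding $1/\alpha_n$ with Cauchy estimates, but these are the same operators.

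Where you genuinely add something is the ``only if'' direction, which the paper's proof does not address at all. Your reproducing-kernel route is the cleaner of your two options. For $n\notin p\mathbb{N}$, $\mathbb{A}(w)^*e_n=\overline{\alpha_n(w)}e_n$ gives $\mathfrak{A}^*(k_we_n)=\overline{\alpha_n(w)}\,k_we_n$. Invertibility of $\mathfrak{A}^*$ then forces $|\alpha_n(w)|\ge\|T^{-1}\|^{-1}$, and \eqref{eq1} extends this to every $n$. This route uses only the diagonal structure and gives $\|T^{-1}\|\ge 1/s(T)$ directly.

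The other route also works, with two remarks. What you need there is the description of the commutant of the unilateral shift (analytic Toeplitz operators), not commutant lifting. The diagonality step is also superfluous, since $\mathbb{A}(z)\mathbb{B}(z)=\mathbb{B}(z)\mathbb{A}(z)=I$ already gives $\sup_n|\alpha_n(z)|^{-1}=\|\mathbb{A}(z)^{-1}\|\le\|S\|$.

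One caveat you share with the paper: the identification $\mathcal{H}\cong H^2(\mathbb{D};\mathcal{L}_0)$ needs $p\ge2$. For $p=1$ the lemma is false; for example, $A_1=2I$ gives $T=3I$ invertible while $\alpha_n(-1/2)=0$.
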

\begin{proof} Let $T$ have the representation \eqref{eq2} and consider the infinite block Toeplitz operator framework given in the previous paragraphs. We show that $\mathbb{A}$ is invertible in the Banach algebra $H^{\infty}\big(\mathbb{D};\mathcal{B}(\mathcal{L}_0)\big)$ by exhibiting the analytic family of operators that give the inverse. The key point is that this algebra is commutative. In spite of this being elementary, we give most details of the construction for later purposes. 

Assume the hypothesis. Let $\mathbb{B}(z)=\sum_{k=0}^\infty B_kz^k$, where \begin{equation} \label{invexp} \begin{gathered}B_0=I,\qquad B_1=-A_1,\qquad B_2=A_1^2-A_2 \qquad \text{and} \\B_k=-(A_1B_{k-1}+A_2B_{k-2}+\cdots+A_k B_0).   \end{gathered} \end{equation} Then, \[\mathbb{A}(z)\mathbb{B}(z)=\mathbb{B}(z)\mathbb{A}(z)=I\] for all $z\in\mathbb{D}$. Moreover, $\mathbb{B}(z)$ is a family of diagonal operators and \[\mathbb{B}(z)=\mathrm{diag}[\beta_n(z)]_{n=1}^\infty,\] where \[\beta_n(z)=1+\sum_{k=1}^\infty b_k(n)z^k=\frac{1}{\alpha_n(z)}\] have coefficients $b_k(n)\in\mathbb{C}$ with expression in terms of $a_k(n)$ satisfying a recursion analogous to \eqref{invexp}. Note that $B_k=\mathrm{diag}[b_k(n)]_{n=1}^{\infty} $ satisfy \eqref{eq1}. 

Since \[0<s(T)\leq \inf_{z\in\mathbb{D}}|\alpha_n(z)|,\] for all fixed $n\in\mathbb{N}$, then each $\beta_n(z)$ is a bounded analytic function on $\mathbb{D}$. Moreover, \[\sup_{\substack{z\in\mathbb{D}\\n\in\mathbb{N}}}|\beta_n(z)|=\frac{1}{\inf_{\substack{z\in\mathbb{D}\\n\in\mathbb{N}}}|\alpha_n(z)|}=\frac{1}{s(T)}<\infty.\] Thus, $\mathbb{B}\in H^{\infty}\big(\mathbb{D};\mathcal{B}(\mathcal{L}_0)\big)$, $\mathbb{B}$ is the inverse of $\mathbb{A}$ in this Banach algebra and \[\|T^{-1}\|_{\mathcal{B}(\mathcal{H})}=\|\mathbb{B}\|_{H^{\infty}(\mathbb{D};\mathcal{B}(\mathcal{L}_0))}=\frac{1}{s(T)}.\]   
   \end{proof}

We are now ready to single-out the case of $\mathbb{A}$ being a polynomial and link with the framework of the symmetrised polydisc given at the beginning of this section.

\begin{cor} \label{cor2} Let $d\in\mathbb{N}$ and $p\in\mathbb{N}$. For $k=1,\ldots,d$ let $A_k=\operatorname{diag}[a_k(n)]_{n=1}^\infty$ be diagonal operators satisfying \eqref{eq1}. Let $T=I+\sum_{k=1}^d A_kM_{p^k}\in\mathcal{B}(\mathcal{H})$. Then, $T$ is invertible if and only if  there exists an open set $U\subset \mathbb{C}^d$ such that $\operatorname{cl}(U) \subset \mathbb{G}_d$ and \[(a_1(n),\cdots,a_d(n))\in U\] for all $n\in\mathbb{N}$.\end{cor}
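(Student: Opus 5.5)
The plan is to reduce everything to Lemma~\ref{theorem1}. Here $\alpha_n(z)=1+\sum_{k=1}^d a_k(n)z^k$ is a polynomial, so $T$ is invertible if and only if
\[
s(T)=\inf_{z\in\mathbb{D},\,n\in\mathbb{N}}|\alpha_n(z)|>0 .
\]
The conditions of the lemma are automatic in this setting. The coefficients $a_k(n)$ are bounded because each $A_k$ is bounded, so \eqref{eq3} holds. The family $\mathbb{A}$ is an operator polynomial, hence analytic. For $\underline{a}\in\mathbb{C}^d$ I will write $\alpha_{\underline{a}}(z)=1+\sum_k a_kz^k$ and
\[
m(\underline{a})=\min_{|z|\le 1}|\alpha_{\underline{a}}(z)| .
\]
By continuity of $\alpha_{\underline{a}}$ up to the boundary, $\inf_{z\in\mathbb{D}}|\alpha_{\underline{a}}(z)|=m(\underline{a})$. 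Two facts drive the proof.

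\begin{enumerate}[label=(\roman*)]
\item The function $m$ is continuous on $\mathbb{C}^d$. This holds because $(\underline{a},z)\mapsto\alpha_{\underline{a}}(z)$ is jointly continuous and $\operatorname{cl}(\mathbb{D})$ is compact.
\item By the identification made at the start of this section, $\underline{a}\in\mathbb{G}_d$ if and only if $\alpha_{\underline{a}}$ has no zero in $\operatorname{cl}(\mathbb{D})$, that is, if and only if $m(\underline{a})>0$.
\end{enumerate}

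For the ``if'' direction, note that $\mathbb{G}_d$ is bounded, since $|\pi_{d,k}(\underline{\lambda})|\le\binom{d}{k}$ on $\mathbb{D}^d$. Hence $\operatorname{cl}(U)$ is a compact subset of $\mathbb{G}_d$. By (i) and (ii), $m$ is continuous and strictly positive on $\operatorname{cl}(U)$, so it attains a minimum $\delta>0$ there. Since every coefficient vector $(a_1(n),\dots,a_d(n))$ lies in $U$, we get $s(T)\ge\delta>0$, and Lemma~\ref{theorem1} gives invertibility.

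For the ``only if'' direction, suppose $T$ is invertible. Lemma~\ref{theorem1} gives $\delta=s(T)>0$, so $m(a_1(n),\dots,a_d(n))\ge\delta$ for all $n$. Let $C=\max_k\|A_k\|$ and define
\[
U=\{\underline{a}\in\mathbb{C}^d:\ |a_k|<C+1 \text{ for all } k,\ m(\underline{a})>\delta/2\}.
\]
By (i), $U$ is open, and it contains every coefficient vector. Every point of $\operatorname{cl}(U)$ satisfies $m\ge\delta/2>0$, again by continuity of $m$. So by (ii), $\operatorname{cl}(U)\subset\mathbb{G}_d$.

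The main point requiring care is (ii) in the degenerate cases, for instance when $a_d=0$ so that $\alpha_{\underline{a}}$ has degree less than $d$. This case has to match the earlier identification: writing $\alpha_{\underline{a}}(z)=\prod_j(1+\overline{\lambda_j}z)$, a drop in degree corresponds to allowing some $\lambda_j=0$, which is permitted since $0\in\mathbb{D}$. So the equivalence ``all roots in $\mathbb{C}\setminus\operatorname{cl}(\mathbb{D})$ iff $\underline{a}\in\mathbb{G}_d$'' still holds. I would verify this factorisation step explicitly, because the rest of the argument is routine compactness and continuity.
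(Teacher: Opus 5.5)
Your proof is correct and follows the paper's route: both reduce the corollary to Lemma~\ref{theorem1} by showing that the existence of $U$ is equivalent to $s(T)>0$. The paper states this equivalence in terms of the root set $W$ avoiding an open neighbourhood $V$ of $\operatorname{cl}(\mathbb{D})$, without proving it. Your continuity of $m$ on coefficient space, together with compactness of $\operatorname{cl}(U)$ (which follows from boundedness of $\mathbb{G}_d$), gives a complete justification in both directions.
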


\begin{proof} The condition \eqref{eq3} holds automatically, due to the fact that $A_k$ are diagonal and hence bounded operators. 

Now, the existence of the stated open subset $U$ is equivalent to the fact that the roots of the polynomials $\alpha_n(z)=1+\sum_{k=1}^d a_k(n)z^k$, all lie in the complement of an open set $V\subset \mathbb{C}$ such that $\operatorname{cl}(\mathbb{D})\subset V$. Let \[W=\{\lambda\in\mathbb{C}:\alpha_n(\lambda)=0 \text{ for some }n\in\mathbb{N}\}.\] Then, $W\subset \mathbb{C}\setminus V$ if and only if $s(T)>0$. Note that the latter is the hypothesis required in Lemma~\ref{theorem1}.   \end{proof}

Finally in this section, we formulate a stability criterion for the invertibility of combinations of infinite block Toeplitz operators of the above form. Although elementary, this result will be useful in the next sections to determine the range of parameters for a moving dilated family to form a Riesz basis. Note that here we do not assume that $B_j$ commutes with any of the $M_j$, hence there is no underlying algebra of Toeplitz operators for $T$. 

\begin{cor} \label{BetterthanCristensen}
Let $d\in\mathbb{N}$ and $p\in\mathbb{N}$. For $k=1,\ldots,d$ let $A_k=\operatorname{diag}[a_k(n)]_{n=1}^\infty$ be diagonal operators satisfying \eqref{eq1}. Let $\{B_j\}_{j=2}^\infty$ be another sequence of diagonal operators $B_j=\operatorname{diag}[b_j(n)]_{n=1}^\infty$. Assume that \begin{equation} \label{pertcond} \sum_{j=2}^\infty \sup_{n\in\mathbb{N}} |b_j(n)|<\inf_{\substack{z\in\mathbb{D}\\ n\in\mathbb{N}}} \left|1+\sum_{k=1}^d a_k(n)z^k\right|. \end{equation} Then, the operators $B_j$ are bounded and the operator \[T=I+\sum_{k=1}^d A_kM_p^k+\sum_{j=2}^\infty M_j B_j\] is bounded and invertible.
\end{cor}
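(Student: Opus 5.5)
The plan is to write $T = T_0 + S$, where
\[T_0=I+\sum_{k=1}^d A_kM_p^k \quad\text{and}\quad S=\sum_{j=2}^\infty M_jB_j,\]
and to invoke a Neumann series argument around the invertible operator $T_0$.

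First, boundedness. Condition \eqref{pertcond} makes the left-hand side finite, since the right-hand side is at most $|\alpha_n(0)|=1$. Hence each $\sup_n|b_j(n)|$ is finite, and each $B_j$ is a bounded diagonal operator with $\|B_j\|=\sup_n|b_j(n)|$. Each $M_j$ maps the orthonormal basis injectively into itself, so it is an isometry and $\|M_j\|=1$. Therefore the series defining $S$ converges absolutely in operator norm, with
\[\|S\|\le \sum_{j=2}^\infty \|M_j\|\,\|B_j\| = \sum_{j=2}^\infty\sup_{n}|b_j(n)|.\]
The operator $T_0$ is a finite sum of bounded operators, so $T$ is bounded.

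Second, invertibility of $T_0$. Put $\alpha_n(z)=1+\sum_{k=1}^d a_k(n)z^k$. The hypotheses of Lemma~\ref{theorem1} hold: the family $\alpha_n$ is polynomial, hence analytic, and \eqref{eq3} holds because $|\alpha_n(z)|\le 1+\sum_{k=1}^d\|A_k\|$. The strict inequality \eqref{pertcond} forces
\[s(T_0)=\inf_{z\in\mathbb{D},\,n\in\mathbb{N}}|\alpha_n(z)|>0.\]
Lemma~\ref{theorem1} then gives that $T_0$ is invertible with $\|T_0^{-1}\|=1/s(T_0)$.

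Third, the perturbation. Write $T=T_0\,(I+T_0^{-1}S)$. Then
\[\|T_0^{-1}S\|\le \frac{\|S\|}{s(T_0)}\le\frac{\sum_{j\ge2}\sup_n|b_j(n)|}{s(T_0)}<1\]
by \eqref{pertcond}. The Neumann series shows that $I+T_0^{-1}S$ is invertible, and hence so is $T$.

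No step is a genuine obstacle. The only point needing care is that no commutation between $B_j$ and $M_p$ is used anywhere, which is why the bound on $S$ is taken crudely through $\|M_j\|=1$ and the triangle inequality. The sharp identity $\|T_0^{-1}\|=1/s(T_0)$ from Lemma~\ref{theorem1} is exactly what turns \eqref{pertcond} into the Neumann condition.
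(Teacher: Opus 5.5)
Your proposal is correct and follows the paper's proof. Both split $T$ into the Toeplitz part $I+\sum_{k=1}^d A_kM_p^k$ and the perturbation $\sum_{j\ge 2}M_jB_j$; both invert the Toeplitz part via Lemma~\ref{theorem1} with $\|T_0^{-1}\|=1/s(T_0)$, bound the perturbation by $\sum_{j}\sup_n|b_j(n)|$, and conclude with a Neumann series. The only differences are cosmetic: you factor as $T_0(I+T_0^{-1}S)$ rather than $(I+RS^{-1})S$, and you spell out why the left-hand side of \eqref{pertcond} is finite and why $\|M_j\|=1$, which the paper leaves implicit.
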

\begin{proof}
Let \[S=I+\sum_{k=1}^d A_kM_p^k \qquad \text{and}\qquad R=\sum_{j=2}^\infty M_j B_j.\] According to Lemma~\ref{theorem1}, $S$ is invertible and \[\|S^{-1}\|^{-1}=s(S)=\inf_{\substack{z\in\mathbb{D}\\ n\in\mathbb{N}}} |\alpha_n(z)|.\] The right hand side of this is non-zero by the hypothesis. Also, since $\|B_j\|=\sup_{n\in\mathbb{N}}|b_j(n)|<\infty$, the operators $B_j$ are bounded and \[ \|R\|\leq \sum_{j=2}^\infty \|B_j\|<\|S^{-1}\|^{-1}.\] Hence, $\|R\|\|S^{-1}\|<1$ and, since $T=(I+RS^{-1})S$, indeed $T$ is a bounded invertible operator.  
\end{proof}


\section{Linear completeness of trajectories\label{sec2}} 
For $\alpha\geq 0$, denote by $\mathrm{H}^\alpha=\mathrm{H}^\alpha(0,1)$ the Sobolev space, where $f\in \mathrm{H}^\alpha$ iff \[\sum_{n=1}^\infty n^{2\alpha}|\hat{f}(n)|^2<\infty.\] Here and everywhere below, the Fourier coefficients of $f:(0,1)\longrightarrow \mathbb{R}$ are given by \[\hat{f}(n)=\sqrt{2}\int_0^1 f(x)\sin(n\pi x)\,\mathrm{d}x\] and we fix the norm as \[\|f\|_{\alpha}=\sqrt{\sum_{n=1}^\infty n^{2\alpha}|\hat{f}(n)|^2}.\] The associated inner product is \[ \langle f,g\rangle_{\alpha}=\sum_{n=1}^\infty n^{2\alpha}\hat{f}(n)\overline{\hat{g}(n)}.\] 

Our goal for the remaining of the paper will be to examine the following problem about a notion of linear completeness for families of trajectories on Hilbert spaces, studied by Fraenkel in \cite{Fraenkel}. \emph{Let $\mathrm{G}=\{\Gamma_n\}_{n=1}^\infty$ be an ordered family of weakly continuous curves, $\Gamma_n:(0,c)\longrightarrow \mathrm{H}^\alpha$. Find conditions on a subset $\mathtt{S}\subseteq(0,c)^\infty$ which ensure that for all $\{s_n\}_{n=1}^\infty\in \mathtt{S}$ the sequence of functions $\{g_n\}_{n=1}^\infty$ given by $g_n=\Gamma_n(s_n)$ is a basis of $\mathrm{H}^{\alpha}$.} That is, $\mathtt{S}$ characterises a generic choice of functions on each of the $\Gamma_n$, so that they form a basis. We will focus on trajectories $\Gamma_n$ formed by dilations of specific families of functions depending on a parameter, as we shall discuss next. Here the constant $c>0$ might depend on $\alpha$. 

Let \begin{equation}\label{onb} h_n(x)=\frac{\sqrt{2}}{n^\alpha}\sin(n\pi x).\end{equation} Then, $\{h_n\}_{n=1}^\infty\subset\mathrm{H}^\alpha$ is an orthonormal basis. For $s\in(0,c)$, let $s\longmapsto f_s\in \mathrm{H}^\alpha$ be a family of functions such that $\widehat{f_s}(1)=1$. Let $\Gamma_n(s)=\frac{1}{n^\alpha} f_s(n\,\cdot)$ where we use the odd 2-periodic extension of $f_s$ to define the dilation. For a sequence $\{s_n\}_{n=1}^\infty\in(0,c)^\infty$, let $g_{n}(x)=\frac{1}{n^\alpha} f_{s_n}(nx)$. Then, \begin{align*} g_n(x)&=\sum_{j=1}^\infty \frac{\langle f_{s_n},h_j\rangle_{\alpha}}{n^{\alpha}} h_j(nx)  = \sum_{j=1}^\infty  \frac{j^{2\alpha}\widehat{f_{s_n}}(j)\overline{\widehat{h_j}(j)}}{n^{\alpha}}h_{j}(nx) \\ & =\sum_{j=1}^\infty \frac{j^{\alpha} \widehat{f_{s_n}}(j)}{n^\alpha}\, \frac{\sqrt{2}\sin(jn\pi x)}{j^\alpha} = \sum_{j=1}^\infty j^\alpha \widehat{f_{s_n}}(j) h_{nj}(x)  = \sum_{j=1}^\infty M_j C_j h_n(x), \end{align*} where $M_jh_n=h_{jn}$ are spread shifts in $\mathrm{H}^\alpha$ and $C_j:\mathrm{H}^\alpha\longrightarrow \mathrm{H}^\alpha$ are the diagonal operators \[C_j=j^\alpha\operatorname{diag}\left[ \widehat{f_{s_n}}(j)\right]_{n=1}^\infty.\] Note that $C_1=I$ and $M_1=I$. Hence, $\{g_n\}_{n=1}^\infty\subset \mathrm{H}^\alpha$ is a Riesz basis if and only if the linear operator \begin{equation}\label{opT} T=I+\sum_{j=2}^\infty M_jC_j \end{equation} is invertible on $\mathrm{H}^\alpha$. 

In \cite{HLS}, Hedenmalm, Lindqvist and Seip, established necessary and sufficient conditions for the invertibility of $T$ for $\alpha=0$ in the case of a constant sequence, $s_n=s$ for all $n\in\mathbb{N}$. Concretely, according to \cite[Theorem 5.2]{HLS}, the system $\{f_s(n\,\cdot)\}_{n=1}^{\infty}$ is a Riesz basis of $\mathrm{H}^{0}$ if and only if the meromorphic extension of the function defined by the Dirichlet series, \[\mathfrak{d}(z) = \sum_{j=1}^{\infty}\widehat{f_s}(j) j^{-z},\] is bounded and achieves values away from zero in the half-plane $\{\operatorname{Re}(z)\geq 0\}$. As we shall see next, the corollaries~\ref{cor2} and \ref{BetterthanCristensen} provide specific extensions of this result to non-constant $\{s_n\}_{n=1}^\infty$ and general $\alpha\geq 0$. This is simply a re-formulation of a perturbation result dating back to Paley and Wiener, obtained by Christensen in \cite{Ch1995}.

\section{Dilated Weierstrass functions\label{sec3}}

By the above method, we can examine the linear completeness of trajectories of rough functions in $\mathrm{H}^\alpha$. The results presented below are surprising and at the same time they are illustrative.  

Let $\lambda\in(0,1)$ and let $p\not=1$ be a positive integer. Let \[W_{\lambda}(x)=\sqrt{2}\sum_{j=0}^\infty \lambda^j \sin(p^j \pi x) \] be Weierstrass functions. Since \[\widehat{W_\lambda}(k)=\begin{cases} 0 & k\not=p^j \\ \lambda^j & k=p^j, \end{cases}\] then $W_\lambda \in\mathrm{H}^\alpha$ if and only if $\lambda\in\big(0,p^{-\alpha}\big)$. Thus, in the framework of the previous section, set $c=1/p^{\alpha}$,  $\{\lambda_n\}_{n=1}^\infty\in \big(0,p^{-\alpha}\big)^\infty$ and  \begin{equation} \label{wn} w_n(x)=\frac{1}{n^\alpha} W_{\lambda_n}(nx).\end{equation} We study conditions for the family $\{w_n\}_{n=1}^\infty$ to be a Riesz basis of $\mathrm{H}^{\alpha}$.

To begin with, note that there is a direct argument for constant $\lambda_n=\lambda$ where $0<\lambda<1$, that $\{w_n\}$ is a basis of $\mathrm{H}^0$. Indeed, the Dirichlet series associated to $w_n$ is, \[\mathfrak{d}(s)=\sum_{j=0}^\infty \frac{\lambda^j}{p^{js}}=\sum_{j=0}^\infty \left(\frac{\lambda}{p^s}\right)^j.\] If $s$ is such that $\lambda<|p^s|$, then $\mathfrak{d}(s)=\frac{p^s}{p^s-\lambda}.$ But by unique analytic continuation, we know that this identity is true for all $s\in \C$ where the denominator does not vanish. Now, the latter happens if and only if $e^{s \log p}=\lambda.$ So the poles of $\mathfrak{d}$ lie along the line $\{s\in\C\,:\,\operatorname{Re}(s)=\log_p \lambda\}$. Since $\lambda<1$, this line is in the left hand side plane. Also, $\mathfrak{d}$ has no zeros. Hence, automatically by \cite[Theorem 5.2]{HLS}, we know that $\{w_n\}$ is a Riesz basis of $\mathrm{H}^0$ for $\lambda_n=\lambda$ constant.

The following extension of this claim to non-constant $\{\lambda_n\}$ and $\alpha\not=0$  is one of our main contributions in the present work. Its validity in the set $\mathtt{S}_1$ is surprising, in light of the fact that $\mathrm{H}^\alpha$ is the sharp regularity of $W_{\frac{1}{p^\alpha}}$ and that $\inf \lambda_n$ is allowed to vanish. 

\begin{thm} \label{theorem4.1} Let $\alpha\geq 0$. The set $\{w_n\}$ formed by the dilations \eqref{wn} of the Weierstrass functions with moving indices $\{\lambda_n\}$, is a Riesz basis of $\mathrm{H}^{\alpha}$ for all $\{\lambda_n\}\in \mathtt{S}_0 \cup \mathtt{S}_1$, where \[ \begin{aligned} \mathtt{S}_0&=\left\{\{\lambda_n\}\in \left(0,\frac{1}{2p^{\alpha}}\right)^{\infty}:\,\sup \lambda_n<\frac{1}{2p^{\alpha}}\right\} \qquad \text{and} \\ \mathtt{S}_1&=\left\{\{\lambda_n\}\in \left(0,\frac{1}{p^{\alpha}}\right)^{\infty}:\, \sup \lambda_n<\frac{1}{p^{\alpha}} \text{ and } \lambda_n=\lambda_{pn}\,\forall n\in\mathbb{N}\right\}. \end{aligned}\] \end{thm}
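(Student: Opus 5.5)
The strategy is to write the operator $T$ from \eqref{opT} for $f_s=W_s$ explicitly and then apply Corollary~\ref{BetterthanCristensen} for $\mathtt{S}_0$ and Lemma~\ref{theorem1} for $\mathtt{S}_1$. Since $\widehat{W_{\lambda}}(j)$ vanishes unless $j=p^k$, where it equals $\lambda^k$, the only nonzero coefficients in \eqref{opT} are
\[
C_{p^k}=p^{k\alpha}\operatorname{diag}[\lambda_n^k]_{n=1}^\infty .
\]
Hence
\[
T=I+\sum_{k=1}^\infty M_{p^k}\operatorname{diag}[(p^\alpha\lambda_n)^k]_{n=1}^\infty=I+\sum_{k=1}^\infty M_p^kD_k ,
\]
where we write $\mu_n=p^\alpha\lambda_n$ and $D_k=\operatorname{diag}[\mu_n^k]_{n=1}^\infty$. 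The family $\{w_n\}$ is a Riesz basis of $\mathrm{H}^\alpha$ exactly when this $T$ is invertible, so in both cases it suffices to prove invertibility of $T$.

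\emph{Case $\mathtt{S}_0$.} Set $q=\sup_n\mu_n$, so that $q<1/2$. Take $d=1$ with $A_1=0$ in Corollary~\ref{BetterthanCristensen}; then the infimum on the right of \eqref{pertcond} equals $1$. Treat the whole tail as the perturbation, with $B_{p^k}=D_k$ for $k\ge1$ and $B_j=0$ when $j$ is not a power of $p$. The coefficients sit on the right of $M_j$, which is the form allowed in the corollary, and no commutation with $M_p$ is needed. The left-hand side of \eqref{pertcond} is
\[
\sum_{k\ge1}q^k=\frac{q}{1-q},
\]
and this is $<1$ precisely because $q<1/2$. So $T$ is invertible. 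Equivalently, $\|T-I\|\le q/(1-q)<1$ and a Neumann series gives the inverse directly.

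\emph{Case $\mathtt{S}_1$.} Now $\lambda_{pn}=\lambda_n$, so each $D_k$ satisfies \eqref{eq1} and therefore commutes with $M_p$. This puts $T$ in the form \eqref{eq2} with $A_k=D_k$. The symbol is diagonal with entries
\[
\alpha_n(z)=1+\sum_{k\ge1}(\mu_nz)^k=\frac{1}{1-\mu_nz}.
\]
With $q=\sup_n\mu_n<1$ we get $|\alpha_n(z)|\le 1/(1-q)$, which gives \eqref{eq3}, and $\mathbb{A}$ is analytic on $\mathbb{D}$ since the series converges uniformly on $\mathbb{D}$. Lemma~\ref{theorem1} applies once we check that
\[
s(T)=\inf_{z,n}\frac{1}{|1-\mu_nz|}\ge\frac{1}{1+q}\ge\frac12>0 .
\]
Hence $T$ is invertible, with $\|T^{-1}\|\le 1+q$. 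In fact $T^{-1}=I-D_1M_p$, which can be checked directly using the commutation relation.

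The main point to verify carefully is that the Toeplitz framework of Lemma~\ref{theorem1} really applies here. This requires that the coefficients act in the order matching \eqref{eq2}; because the $D_k$ commute with $M_p$, the order of $M_p^k$ and $D_k$ is immaterial. It also requires that the positivity of $s(T)$ holds uniformly in $n$ even though $\sup\mu_n$ may approach $1$ and $\inf\lambda_n$ may vanish. Both checks reduce to the elementary bounds on $|1-\mu_nz|$ above. The rest is bookkeeping of the Fourier coefficients of $W_\lambda$.
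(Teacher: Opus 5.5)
Your proof is correct. For $\mathtt{S}_0$ it matches the paper's argument: a Neumann series using $\sum_{l\ge1}(p^\alpha\sup_n\lambda_n)^l<1$.

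For $\mathtt{S}_1$ you take a different and shorter route. The paper applies Corollary~\ref{BetterthanCristensen} with a truncation at level $d$. In your notation $\mu_n=p^\alpha\lambda_n$, with $\nu=\sup_n\mu_n$, it does the following:
\begin{itemize}
\item it keeps only $A_k=C_{p^k}$ for $k\le d$ as the Toeplitz part;
\item it bounds the polynomial symbol $1+\mu_nz+\cdots+(\mu_nz)^d$ from below by $(1-\nu^{d+1})/(1+\nu)$;
\item it bounds the discarded tail by $\nu^{d+1}/(1-\nu)$;
\item it lets $d\to\infty$.
\end{itemize}

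You notice instead that all nonzero terms of $T$ sit at indices $p^k$, and that $D_k=D_1^k$ commutes with $M_p$ when $\lambda_{pn}=\lambda_n$. So the whole operator is already an analytic Toeplitz operator relative to $M_p$. Lemma~\ref{theorem1} then applies to the full infinite series, with the closed-form symbol $(1-\mu_nz)^{-1}$. In fact $T=\sum_{k\ge0}(D_1M_p)^k$, which has the explicit inverse $I-D_1M_p$.

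Your version therefore avoids truncation and perturbation entirely. It gives an explicit inverse, with $\|T^{-1}\|=1+\nu$ by the equality in Lemma~\ref{theorem1}. It also shows that the $\mathtt{S}_1$ case rests only on the convergence of a geometric series.

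The paper's truncation argument does not use the exact geometric form of the coefficients. That is why it serves as the template for Theorem~\ref{twoterm}. There, the terms $M_jC_j$ with $j$ not a power of $p$ are not Toeplitz relative to $M_p$, so they genuinely have to be treated as a perturbation.
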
  
\begin{proof}
Let $\{\lambda_n\}\in \mathtt{S}_0 \cup \mathtt{S}_1$  and $\mu=\sup_{n\in\mathbb{N}}\lambda_n$. Since $\mu<\frac{1}{p^\alpha}$, then the diagonal operators \[C_j=j^{\alpha}\operatorname{diag}\left[\widehat{W_{\lambda_n}}(j)\right]_{n=1}^{\infty}=\operatorname{diag}[c_j(n)]_{n=1}^\infty, \qquad c_j(n)=\begin{cases} 0 & j\not=p^l \\ \left(\lambda_n p^{\alpha}\right)^{l} & j=p^l, \end{cases}    \] with respect to the orthonormal basis $\{h_{n}\}_{n=1}^\infty$ of $\mathrm{H}^\alpha$ in equation \eqref{onb} are bounded and \[\|C_j\|=\begin{cases} 0 & j\not=p^l \\ (\mu p^{\alpha})^l & j=p^l. \end{cases}\] We split the proof into two cases. 

Firstly, assume that $\{\lambda_n\}\in \mathtt{S}_0$. Since $\mu<\frac{1}{2p^\alpha}$, then \[ \sum_{l=1}^\infty (\mu p^{\alpha})^l=\frac{p^\alpha \mu}{1-p^\alpha \mu}<1, \text{ hence }\sum_{j=2}^\infty \|C_j\|= \sum_{l=1}^\infty \|C_{p^l}\|<1.\] Thus, from the expression of the operator $T$ in \eqref{opT} and the fact that $M_p$ are isometries, it follows that $T:\mathrm{H}^\alpha \longrightarrow \mathrm{H}^\alpha$ is invertible and the claimed conclusion about $\{w_n\}$ follows in this case. 

Assume that $\{\lambda_n\}\in \mathtt{S}_1$. We aim to invoke Corollary~\ref{BetterthanCristensen}. For this purpose, fix $d\in\mathbb{N}$, $A_k=C_{p^k}$ for $1\leq k \leq d$ and \[B_j=\begin{cases} 0 & j\leq p^d \\ C_j & j>p^d.\end{cases}\] Then, on the left hand side of \eqref{pertcond} we have \begin{equation} \label{lhsof7} \sum_{j=2}^\infty \sup_{n\in\mathbb{N}} |b_j(n)|\leq \sum_{l=d+1}^{\infty} \left(\mu p^{\alpha}\right)^{l}=\sum_{l=d+1}^{\infty} \nu^l=\frac{\nu^{d+1}}{1-\nu},   \end{equation} where here and for the rest of the proof we set $\nu=\mu p^{\alpha}$. Now, we turn to the right hand side of \eqref{pertcond}. We have \begin{align*} \inf_{\substack{z\in\mathbb{D}\\ n\in\mathbb{N}}} |\alpha_n(z)|&= \inf_{n\in\mathbb{N}} \min_{\theta\in[0,2\pi]}\left|1+(\lambda_n p^\alpha) e^{i\theta}+\cdots+\left(\lambda_np^\alpha\right)^d e^{id\theta}\right| \\ &=\inf_{n\in\mathbb{N}} \min_{z\in \left(\lambda_n p^\alpha \mathbb{T}\right)} \left|1+z+\cdots+z^d\right|. \end{align*} Here $\mathbb{T}=\partial \mathbb{D}$. For any $z\in \left(\lambda_n p^\alpha \mathbb{T}\right)$ we have \[\left|1+z+\cdots +z^d\right|=\frac{\left|1-z^{d+1}\right|}{|1-z|}\geq \frac{1-\left(\lambda_np^\alpha\right)^{d+1}}{1+\lambda_np^\alpha}\geq \frac{1-\nu^{d+1}}{1+\nu}.\] Hence, \begin{equation} \label{rhsof7}\inf_{\substack{z\in\mathbb{D}\\ n\in\mathbb{N}}} |\alpha_n(z)|\geq \frac{1-\nu^{d+1}}{1+\nu}. \end{equation} The restriction $\mu<\frac{1}{p^\alpha}$ defining the set $\mathtt{S}_1$, implies that $\nu<1$. The right hand side of \eqref{lhsof7} has limit $0$, while the right hand side of \eqref{rhsof7} has limit $1$ as $d\to\infty$. Therefore, \eqref{pertcond} is valid for $d$ sufficiently large and so Corollary~\ref{BetterthanCristensen} gives the claimed conclusion in this case.\end{proof}

\begin{rmk} \label{relax} There are several ways to relax the conditions in the previous theorem. The conclusion is still valid, for example, if $\lambda_n=\lambda_{pn}$ for $n\geq N$ large enough. To show that this is indeed the case, note that by virtue of \cite[Theorem~2.20, p.265]{Kato}, we can change finitely many terms in the sequence $\{\lambda_n\}$ without altering the Riesz basis property of the corresponding family $\{w_n\}$, because this family is always \textit{w}-linearly independent.  \end{rmk}

By virtue of this remark, it is logical to conjecture that, most likely, $\{w_n\}$ is a Riesz basis for all $\{\lambda_n\}\in\big(0,\frac{1}{p^\alpha}\big)^\infty$, satisfying $\sup \lambda_n <\frac{1}{p^\alpha}$.


\section{Families of Gross-Pitaevskii eigenfunctions\label{sec4}}

We now turn to an analysis of the eigenfunctions of the non-linear Schr\"{o}dinger eigenvalue problem, \begin{equation} \label{Gross-Pitaevskii} \begin{aligned} &u''-u^3+\eta u=0 \\ &u(0)=u(1)=0,\end{aligned} \end{equation} for the stationary states of a Bose-Einstein condensate in a 1D square well trapping potential.   

The full family of eigenfunctions and eigenvalues of \eqref{Gross-Pitaevskii} is given in terms of Jacobi elliptic functions by \[u(x)=u_n(x,\mu)=2^{\frac32}n\mu K(\mu)\operatorname{sn}\big(2K(\mu)nx,\mu\big)\] and $\eta=\eta_n=4n^2(1+\mu^2)K(\mu)^2.$ Here $n\in\mathbb{N}$, $\mu\in(0,1)$ is the modulus, the complete integral \[K(\mu)=\int_0^1 (1-t^2)^{-\frac12}(1-\mu^2 t^2)^{-\frac12}\,\mathrm{d}t\] is the $\frac14$ period and  $\operatorname{sn}(y,\mu)$ is the odd $4K(\mu)$-periodic extension of the inverse of \[w(z)=\int_0^z (1-t^2)^{-\frac12}(1-\mu^2 t^2)^{-\frac12}\,\mathrm{d}t.\] The Fourier series of $\operatorname{sn}(\cdot,\mu)$ has a simple expression in terms of the nome $q\in(0,1)$, \[q=\exp\left[-\frac{K\big(\sqrt{1-\mu^2}\big)}{K(\mu)}\right].\]  With this expression, it follows that \begin{equation} \label{qn} u_n(x,\mu)=\frac{2^{\frac52}\pi nq^{\frac12}}{1-q}g(nx,q) \quad \text{for} \quad g(x,q)=\sum_{j=0}^\infty \frac{(1-q)q^j}{1-q^{2j+1}}\, e_{2j+1}(x).\end{equation} Note that, since  $\operatorname{sn}(\cdot,\mu)$ is $\operatorname{C}^\infty$ on the real axis, then $g(\cdot,q)\in\mathrm{H}^\alpha$ for all $\alpha\geq 0$. 

Take the trajectories in the family $\mathrm{G}=\{\Gamma_n\}$, parametrised in the variable $q\in(0,1)$ by the $n$-th dilation of $g$. Namely, take $\Gamma_n:q\longmapsto g(n \,\cdot ,q)$. Let us analyse the question of completeness in $\mathrm{H}^\alpha$ for $\mathrm{G}$. Concretely, let us examine conditions on sequences $\{q_n\}_{n=1}^\infty \in (0,1)^\infty$ for $\{g_n\}_{n=1}^\infty\subset \operatorname{H}^\alpha$ to be Riesz bases, where we write \begin{equation}\label{gn}g_n(x)=g(nx,q_n).\end{equation} If they are Riesz bases, then the corresponding families of eigenfunctions $\{u_n\}$ of \eqref{Gross-Pitaevskii} for corresponding moduli $\mu=\mu_n$ are also bases, and hence complete in $\operatorname{H}^\alpha$.

According to the results of \cite{boulton2023completeness}, we know that $\{g_n\}$ is a Riesz basis of  $\mathrm{H}^0$, provided that $\sup q_n<r_0(0)\approx 0.76806$. This threshold $r_0(0)$ is characterised analytically, as follows. By virtue of \cite[Theorem~3]{boulton2023completeness}, the operator \eqref{opT} associated to $\{g_n\}$ (case $\alpha=0$) is invertible, since \[\sum_{j=2}^\infty \|C_j\|\leq s_0\big(\sup q_n\big)-1<s_0(r_0(0))-1=1 , \] where the increasing function $s_0:(0,1)\longrightarrow \mathbb{R}$, is \begin{equation} \label{Lambert1} s_0(q)=\sum_{l=0}^\infty \frac{(1-q)q^l}{1-q^{2l+1}}=\frac{1-q}{\sqrt{q}}\left(L_1(\sqrt{q})-2L_1(q)+L_1(q^2)\right).\end{equation} Here \[L_1(r)=\sum_{n=1}^\infty \frac{r^n}{1-r^n}\] is a Lambert series, \cite[Section~3.7]{Borwein1999}. 

Our next goal now is two-folded. On the one hand, we examine the case $\alpha>0$. On the other hand, we show that the threshold $r_0(\alpha)$ that we obtain for the general case $\alpha\geq0$ is substantially improved, subject to conditions on $\{q_n\}_{n=1}^\infty$, via Corollary~\ref{BetterthanCristensen}. 

We first consider the question of completeness in Sobolev spaces. Since, \[ \widehat{g(\cdot,q)}(n)=\begin{cases} 0 & n=2 l \\ \frac{(1-q)q^l}{1-q^{2l+1}} & n=2l+1,\end{cases}\] then the associated operator \eqref{opT} in this case has coefficients \begin{equation} \label{matCj} C_j= \operatorname{diag}[c_j(n)]_{n=1}^\infty \quad\text{for} \quad c_j(n)=\begin{cases} 0 & j=2l \\ (2l+1)^{\alpha}\frac{(1-q_n)q_n^l}{1-q_n^{2l+1}} & j=2l+1. \end{cases}  \end{equation} Let \[s_\alpha(q)=\sum_{l=0}^\infty (2l+1)^{\alpha} \frac{(1-q)q^l}{1-q^{2l+1}}.\]  Since $q\longmapsto \frac{(1-q)q^l}{1-r^{2l+1}}$ is increasing, then $s_\alpha$ is an increasing function in $q$ and $\alpha$. Thus, the following is valid.

\begin{thm} \label{oneterm}  Let $\alpha\geq 0$. Let $r_0(\alpha)\in (0,1)$ be the unique solutions to the equation $s_{\alpha}(q)=2$. Let \[\mathtt{T}_0=\big\{\{q_n\}\in(0,1)^\infty\,:\,\sup q_n<r_0(\alpha)\big\}.\] Then, the set of eigenfunctions $\big\{u_n(\cdot,\mu_n)\big\}$ of the Gross-Pitaevskii equation \eqref{Gross-Pitaevskii} is a basis of $\mathrm{H}^\alpha$ for all sequences of moduli $\{\mu_n\}$ associated to nomes $\{q_n\}\in \mathtt{T}_0$.   \end{thm}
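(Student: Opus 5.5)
The approach is a Neumann series argument for the operator $T=I+\sum_{j\geq 2}M_jC_j$ of \eqref{opT}, with the $C_j$ given by \eqref{matCj}. This mirrors the first case in the proof of Theorem~\ref{theorem4.1}. First I will settle that $r_0(\alpha)$ is well defined.

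Each summand $q\longmapsto (2l+1)^\alpha\frac{(1-q)q^l}{1-q^{2l+1}}$ is continuous and nondecreasing on $(0,1)$; the term $l=0$ equals $1$ identically. To see monotonicity, write $\frac{(1-q)q^l}{1-q^{2l+1}}=\frac{q^l}{1+q+\cdots+q^{2l}}=\big(\sum_{i=-l}^{l}q^{i}\big)^{-1}$. The sum $\sum_{i=-l}^{l}q^{i}$ is a sum of pairs $q^i+q^{-i}$, each decreasing on $(0,1)$, so the summand is strictly increasing for $l\geq1$. It tends to $0$ as $q\to0$ and to $1/(2l+1)$ as $q\to1$.

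Hence $s_\alpha$ is strictly increasing, and $s_\alpha(q)\to1$ as $q\to0^+$ by dominated convergence, since the summands are bounded by $(2l+1)^\alpha q_0^l$ for $q\le q_0<1$. Also $s_\alpha(q)\to\sum_l(2l+1)^{\alpha-1}=\infty$ as $q\to1^-$ by monotone convergence. It remains to check that $s_\alpha$ is finite and continuous on $(0,1)$: the summands are bounded by $(2l+1)^\alpha q^l$, which is locally uniformly summable. The intermediate value theorem then gives a unique root $r_0(\alpha)$ of $s_\alpha(q)=2$.

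Next, let $\{q_n\}\in\mathtt{T}_0$ and $Q=\sup q_n<r_0(\alpha)$. Since the $M_j$ are isometries of $\mathrm{H}^\alpha$ (they map the orthonormal basis $\{h_n\}$ injectively into itself), we have $\|M_jC_j\|=\|C_j\|=\sup_n|c_j(n)|$. For $j=2l+1$, monotonicity in $q$ gives $\|C_j\|\le (2l+1)^\alpha\frac{(1-Q)Q^l}{1-Q^{2l+1}}$. Therefore
\[\sum_{j=2}^\infty\|M_jC_j\|\le s_\alpha(Q)-1<s_\alpha(r_0(\alpha))-1=1.\]
So $T=I+R$ with $\|R\|<1$, and $T$ is invertible by the Neumann series. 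Equivalently, this is Corollary~\ref{BetterthanCristensen} with $d=0$, or with all $A_k=0$, where the right side of \eqref{pertcond} equals $1$.

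By the discussion in Section~\ref{sec2}, invertibility of $T$ shows that $\{g_n\}$ in \eqref{gn} is a Riesz basis of $\mathrm{H}^\alpha$. One point needs care: that discussion normalises by $n^{-\alpha}$ and takes the first Fourier coefficient equal to $1$. Here $\widehat{g(\cdot,q)}(1)=1$ holds, and the rescaled functions $n^{-\alpha}g_n$ differ from $g_n$ by the nonzero scalars $n^{-\alpha}$. The main thing to check is that the conclusion survives these scalings, since the scalars are unbounded and could destroy the Riesz property.

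Rescaling by nonzero scalars does preserve completeness and the Schauder basis property, so $\{g_n\}$ is a basis, and in the normalised form $\{n^{-\alpha}g_n\}$ it is a Riesz basis. The same reasoning applies to $u_n(\cdot,\mu_n)=\frac{2^{5/2}\pi n q_n^{1/2}}{1-q_n}g_n$ by \eqref{qn}, since its coefficient is a nonzero scalar. This yields the claimed basis property for the Gross--Pitaevskii eigenfunctions.
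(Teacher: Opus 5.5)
Your proposal is correct and follows the paper's own argument. You bound $\sum_{j\geq2}\|M_jC_j\|$ by $s_\alpha(\sup q_n)-1<1$, invert $T$ by a Neumann series, and then pass to the eigenfunctions $u_n$ as nonzero scalar multiples of the $g_n$. Your extra details are all sound and tidy up points the paper passes over: the existence and uniqueness of $r_0(\alpha)$, starting the sum at $j=2$ (the paper's proof writes $j=1$), and the observation that the Riesz basis is the normalised family $\{n^{-\alpha}g_n\}$, while $\{g_n\}$ and $\{u_n\}$ are bases obtained by nonzero rescaling.
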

\begin{proof} Recall the expressions \eqref{qn} and \eqref{gn}. Observe that, similar to the case $\alpha=0$, we have \[\sum_{j=1}^\infty \|C_j\|\leq s_{\alpha}(\sup q_n)-1<1.\] Therefore, $\{g_n\}\subset \operatorname{H}^\alpha$ is a Riesz basis. Thus, $\{u_n(\cdot,\mu_n)\}\subset \operatorname{H}^\alpha$ is indeed a basis. \end{proof}

We now characterise the threshold $r_0(\alpha)$ of the previous theorem, in terms of Lambert series.  For any $f:[0,\infty)\longrightarrow [0,\infty)$ and $r\in(0,1)$, denote by  \[L_f(r)=\sum_{n=1}^\infty f(n) \frac{r^n}{1-r^n}\] the associated (generalised) Lambert series, \cite[p.99]{Borwein1999}. Splitting the left hand side into even and odd terms, gives \[ \sum_{n=1}^\infty f(n) \frac{r^n}{1-r^{2n}}=L_f(r)-L_{f}(r^2).\] Let $f_2(n)=f(2n)$. Then, by applying the previous identity twice, we have that \begin{align*} L_f(r)-L_{f}(r^2)&-L_{f_2}(r^2)+L_{f_2}(r^4) =\sum_{j=0}^\infty f(2j+1)\frac{r^{2j+1}}{1-r^{4j+2}}.\end{align*} Hence, we have the representation \[ s_{\alpha}(q)=\frac{1-q}{\sqrt{q}}\left(L_f(\sqrt{q})-L_{f}(q)-L_{f_2}(q)+L_{f_2}(q^2)\right)\] for $f(n)=n^\alpha$ and $f_2(n)=(2n)^\alpha$. Note that this matches \eqref{Lambert1} for $f(n)=1$ and $\alpha=0$.

\begin{rmk} \label{alpha1} Evaluating the Fourier series  \cite[22.11.13]{NIST} of $\operatorname{sn}(x,\mu)^2$ at $x=0$, leads to the identity \[\sum_{n=1}^\infty n \frac{r^n}{1-r^{2n}}=\frac{K(\nu)}{2\pi^2}\big(K(\nu)-E(\nu)\big),\] where $\nu$ is the nome associated to modulus $r$ and $E(\nu)$ is the complete elliptic integral of second kind \cite[19.2.8]{NIST}. Then, since \[s_1(q)=\frac{1-q}{\sqrt{q}}\left(\sum_{n=1}^\infty n \frac{q^{\frac{n}{2}}}{1-q^{n}} - 2\sum_{n=1}^\infty n \frac{q^{n}}{1-q^{2n}}\right),\] we obtain a close expression for $s_1$, and hence $r_0(1)$, in terms of the two classical elliptic integrals. Combining this with monotonicity in $\alpha$, might lead to accessible analytic upper bounds for $r_0(1)$. \end{rmk}

\medskip

We now employ Corollary~\ref{BetterthanCristensen} in order to show that Theorem~\ref{oneterm} is not optimal. The next statement is our main result in this respect.
In Figure~\ref{fig2} (left) we present a graph comparing numerical approximations of the parameters involved in the statement. It strongly indicates that, in the segment $\alpha\in[0,2]$, the value of $r_1(\alpha)$ is a noticeable improvement  with respect to the threshold $r_0(\alpha)$ of Theorem~\ref{oneterm}  near $\alpha=0$.    

\begin{thm} \label{twoterm} Let $p\not=1$ be a positive integer. For $\alpha\geq 0$, let $r_1(\alpha)>0$ be the unique solution to the equation \[ s_\alpha(q)=2+\frac{p^\alpha(1-q)q^{\frac{p^2-1}{2}}}{2(1-q^{p^2})} .\] Let \[\mathtt{T}_1=\left\{\{q_n\}\in\big(0,1\big)^\infty\,:\, \sup q_n<r_1(\alpha)\text{ and } q_n=q_{pn}\,\forall n\in\mathbb{N} \right\}.\]  Then, the set of eigenfunctions $\big\{u_n(\cdot,\mu_n)\big\}$ of the Gross-Pitaevskii equation \eqref{Gross-Pitaevskii} is a basis of $\mathrm{H}^\alpha$ for all sequences of moduli $\{\mu_n\}$ associated to nomes $\{q_n\}\in \mathtt{T}_1$. \end{thm}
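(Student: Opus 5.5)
The plan is to follow the proof of Theorem~\ref{theorem4.1} for the set $\mathtt{S}_1$ and invoke Corollary~\ref{BetterthanCristensen} on the operator \eqref{opT}, with the coefficients \eqref{matCj}. A short check shows why the choice of $d$ matters. With $d=1$ and $A_1=C_p$ one gains nothing over Theorem~\ref{oneterm}: the left side of \eqref{pertcond} becomes $s_\alpha-1-c_p$ and the right side becomes $1-c_p$, so the condition collapses back to $s_\alpha<2$. I will therefore take $d=2$, with
\[A_1=C_p,\qquad A_2=C_{p^2},\qquad B_j=C_j \text{ for } j\notin\{1,p,p^2\},\qquad B_j=0 \text{ otherwise}.\]
Since $C_j=0$ for $j$ even, only odd $p$ gives anything new. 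For even $p$ the statement should reduce to Theorem~\ref{oneterm}, possibly after a separate remark, and I will check this separately.

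First, the hypothesis $q_n=q_{pn}$ gives $c_j(n)=c_j(pn)$ for every $j$. Hence $A_1$ and $A_2$ satisfy \eqref{eq1}, and Lemma~\ref{theorem1} and Corollary~\ref{cor2} apply to $S=I+A_1M_p+A_2M_p^2$.

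Second, I bound the left side of \eqref{pertcond}. Each coefficient $c_j(n)$ is increasing in $q_n$. Writing $q^*=\sup q_n<r_1(\alpha)$, this gives
\[\sum_{j}\sup_n|b_j(n)|\le s_\alpha(q^*)-1-a(q^*)-b(q^*),\]
where
\[a(q)=p^\alpha\frac{(1-q)q^{(p-1)/2}}{1-q^p},\qquad b(q)=p^{2\alpha}\frac{(1-q)q^{(p^2-1)/2}}{1-q^{p^2}}.\]

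Third, and this is the main obstacle, I need a lower bound for $\inf_{n,z}|1+a_nz+b_nz^2|$, where $a_n=a(q_n)$ and $b_n=b(q_n)$. By the maximum principle it suffices to take $z=e^{i\theta}$. Then
\[|1+az+bz^2|^2=\bigl(a+(1+b)\cos\theta\bigr)^2+(1-b)^2\sin^2\theta,\]
which is a quadratic in $\cos\theta$ that I will minimise explicitly. When $b$ is small relative to $a$ (roughly $a\ge 4b$) the minimum sits at $\theta=\pi$, giving $1-a+b$. Otherwise I will fall back on a cruder estimate of the form $1-a+\kappa b$. The stated threshold, with its factor $\tfrac12 p^\alpha$ rather than $p^{2\alpha}$, suggests that the authors use exactly such a crude but uniform bound, valid for all $q$ and $p$.

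There is a further difficulty: the right side involves $\inf_n$, and $1-a(q)+\kappa b(q)$ is not obviously monotone in $q$. To handle this I would show that $q\mapsto s_\alpha(q)-2-(\kappa+1)b(q)$ is increasing, or argue directly that the per-$n$ inequality combined with the sup-bound on the left side suffices. If neither works, I would split the left-side sum into the terms $j=p,p^2$, evaluated at $q_n$, and the remainder, bounded at $q^*$. The goal is a reduction to
\[s_\alpha(q^*)<2+(\kappa+1)\,b(q^*),\]
which should match the defining equation of $r_1(\alpha)$ once the constants are fixed.

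Finally, I will check uniqueness of $r_1(\alpha)$. The function $s_\alpha$ increases from $1$ to $\infty$ as $q$ goes from $0$ to $1$, while the right side is bounded. I will need the difference of the two sides to be monotone, or at least to change sign once. Given uniqueness, $\sup q_n<r_1(\alpha)$ yields \eqref{pertcond}, so $T$ is invertible and $\{g_n\}$ is a Riesz basis. The conclusion for $\{u_n\}$ then follows from \eqref{qn} exactly as in the proof of Theorem~\ref{oneterm}.
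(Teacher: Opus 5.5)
You have the same architecture as the paper: $d=2$, $A_1=C_p$, $A_2=C_{p^2}$, Corollary~\ref{BetterthanCristensen}, and a quadratic in $\cos\theta$ that reproduces the two-regime formula \eqref{casesmin}. The gap is exactly the step you call the main obstacle, and it is not just a matter of fixing constants.

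No lower bound of the form $1-a+\kappa b$ with $\kappa>-1$ can hold uniformly in $(a,b)$. In the regime $a(1+b)<4b$ the minimum is $(1-b)\sqrt{1-a^2/(4b)}$, which tends to $1-b$ as $a\to 0$. That forces $\kappa\le -1$, and you are back to $s_\alpha<2$.

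The missing ingredient is a comparison between the two coefficients along the curve:
\[
\frac{b(q)}{p^\alpha a(q)}=q^{\frac{p^2-p}{2}}\,\frac{1-q^p}{1-q^{p^2}}<1,
\]
that is, $b<p^\alpha a$. With it, in the regime $b>a/(4-a)$ (equivalent to $a(1+b)<4b$) one has $\sqrt{1-a^2/(4b)}>1-a/2$. Hence, for $b<1$,
\[
(1-b)\sqrt{1-\tfrac{a^2}{4b}}>(1-b)\left(1-\tfrac{a}{2}\right)=1-a-b+\tfrac{a(1+b)}{2}>1-a-b\left(1-\tfrac{1}{2p^\alpha}\right).
\]
In the other regime, $1-a+b$ trivially exceeds the same quantity. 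So $\kappa=\frac{1}{2p^\alpha}-1$, which explains the factor $\frac{1}{2p^\alpha}$ in the equation defining $r_1(\alpha)$.

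Because $\kappa<0$, your monotonicity worry disappears. Both $1-a(q)-(1-\frac{1}{2p^\alpha})b(q)$ and $(1-b(q))(1-\frac{a(q)}{2})$ are decreasing in $q$. So the right side of \eqref{pertcond} exceeds $1-a(r_1)-b(r_1)(1-\frac{1}{2p^\alpha})$. The left side is at most $s_\alpha(r_1)-1-a(r_1)-b(r_1)$. These two coincide by the defining equation, so the paper evaluates both sides at $r_1$. You never need monotonicity of $s_\alpha-2-\frac{b}{2p^\alpha}$.

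There are two further points.
\begin{enumerate}
\item You must check $(a(q_n),b(q_n))\in\mathbb{G}_2$ before reducing to $|z|=1$. Positivity on the circle does not exclude interior zeros; consider $1+2z^2$. The paper rewrites the defining equation as $a(r_1)+b(r_1)(1-\frac{1}{2p^\alpha})<1$. Combined with $b<p^\alpha a$, this gives $b(r_1)<1$ and $1-a(q_n)+b(q_n)>0$.
\item For even $p$ the claim does not reduce to Theorem~\ref{oneterm}. The right side of the defining equation exceeds $2$, so $r_1(\alpha)>r_0(\alpha)$, while $C_p=C_{p^2}=0$ contributes nothing. Your argument, and the paper's too (its formulas for $a$ and $b$ take $p=2l+1$), only covers odd $p$.
\end{enumerate}
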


\begin{figure} \includegraphics[width=0.46\textwidth]{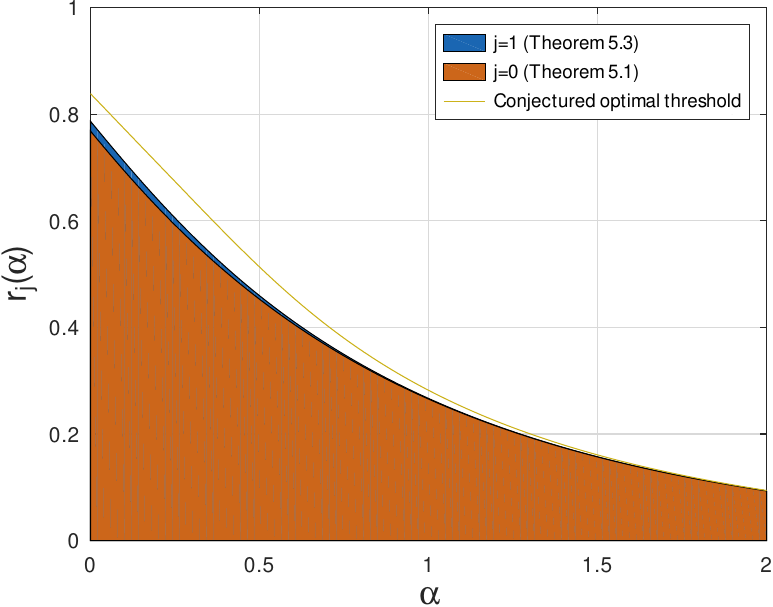}\quad  \includegraphics[width=0.46\textwidth]{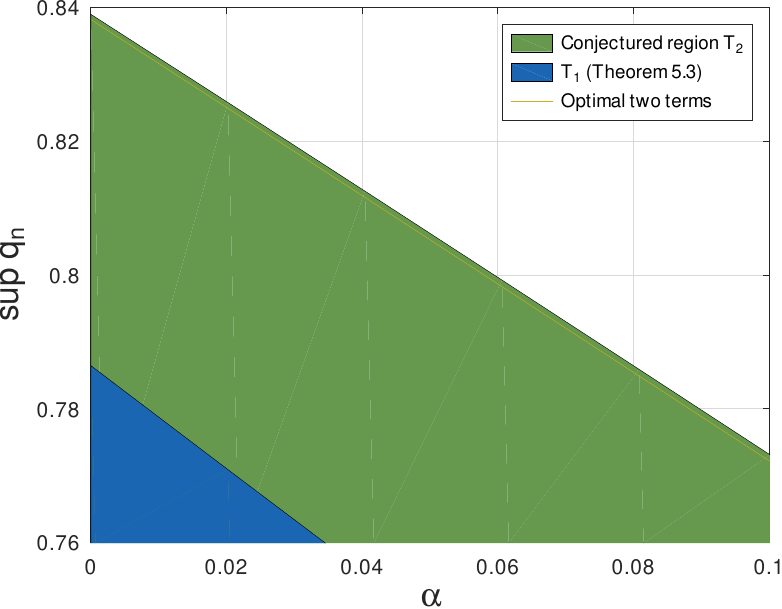} \caption{Illustration of the regions $\mathtt{T}_j$ and thresholds $r_j(\alpha)$ covered by theorems~\ref{oneterm} and \ref{twoterm}, and Remark~\ref{remarkTj} for $p=3$: (left) $\alpha\in [0,2]$ and $j=0,1$ alongside with the conjectured optimal threshold $\tilde{r}_1(\alpha)$ and (right) $\alpha\in [0,0.1]$ and $j=1,2$. We obtained the numerical approximation of $s_{\alpha}(q)$ taking $500$ terms in the summation. \label{fig2}} \end{figure} 

\begin{proof}  We drop the index $\alpha$, writing $r_1$ instead of $r_1(\alpha)$. The strategy is to take in Corollary~\ref{BetterthanCristensen};  $d=2$, $A_1=C_p$, $A_2=C_{p^2}$ and $B_j=C_j$ for $j\not=p,p^2$, where the diagonal matrices $C_j$ are given by \eqref{matCj}. We show that this strategy applies.  

Let \[a(q)=p^\alpha\frac{(1-q)q^{\frac{p-1}{2}}}{1-q^p} \quad\text{and} \quad b(q)=p^{2\alpha}\frac{(1-q)q^{\frac{p^2-1}{2}}}{1-q^{p^2}}.\]Then, both these quantities are increasing as functions of $q\in(0,1)$. Also, \[ \frac{b(q)}{p^{\alpha}}<a(q)   \qquad \text{ and  } \qquad s_\alpha(r_1)=2+\frac{b(r_1)}{2  p^\alpha}\]  

Let $\{q_n\}\in \mathtt{T}_1$. We first show that $\big(a(q_n),b(q_n)\big)\in\mathbb{G}_2$ for all $n\in\mathbb{N}$. Indeed, since\[s_\alpha(r_1)=\sum_{l\in \mathbb{N}\setminus\{\frac{p-1}{2},\frac{p^2-1}{2}\}} (2l+1)^{\alpha} \frac{(1-r_1)r_1^l}{1-r_1^{2l+1}}+1+a(r_1)+b(r_1),\] then $a(r_1)+b(r_1)\left(1-\frac{1}{2  p^\alpha}\right)<1.$ Thus, $b(r_1)<1$ and \begin{equation} \label{inG}\begin{aligned}1-a(q_n)+b(q_n)&> 1-a(q_n)-b(q_n)\left(1-\frac{1}{2  p^{\alpha}}\right)\\ &\geq 1-a(r_1)-b(r_1)\left(1-\frac{1}{2  p^{\alpha}}\right)>0 .\end{aligned} \end{equation} Hence, $\big(a(q_n),b(q_n)\big)\in\mathbb{G}_2$. See Figure~\ref{fig1} (left). 

 We now probe the hypothesis of Corollary~\ref{BetterthanCristensen}. By positivity and monotonicity of the coefficients in the series, we have that the left hand side of \eqref{pertcond} has the following upper bound,\begin{align*} \sum_{j=2}^\infty \sup_{n\in\mathbb{N}} |b_j(n)| & = \sum_{l\in \mathbb{N}\setminus\{\frac{p-1}{2},\frac{p^2-1}{2}\}}(2l+1)^{\alpha}  \sup_{n\in \mathbb{N}} \frac{(1-q_n)q_n^l}{1-q_n^{2l+1}}\\   &\leq \sum_{l\in \mathbb{N}\setminus\{\frac{p-1}{2},\frac{p^2-1}{2}\}} (2l+1)^{\alpha} \frac{(1-r_1)r_1^l}{1-r_1^{2l+1}}  \\ &= s_\alpha(r_1) -1-a(r_1)-b(r_1) \\ & =1-a(r_1)-b(r_1)\left(1-\frac{1}{2  p^{\alpha}}\right).\end{align*}

Now, for $(a,b)\in\mathbb{G}_2\cap [0,\infty)^2$, the maximum principle gives \begin{align}\min_{z\in\mathbb{D}}|1+az+bz^2|& =\begin{cases} 1-a+b & \frac{a(b+1)}{4b}\geq  1  \\ (1-b)\sqrt{1-\frac{a^2}{4b}} & \text{otherwise}.\end{cases}\label{casesmin}\end{align} Then, the right hand side of \eqref{pertcond}, obeys the following branching for the different $n\in\mathbb{N}$. In the case $\frac{a(q_n)(b(q_n)+1)}{4b(q_n)}\geq 1$, we have \[\min_{z\in\mathbb{D}}\big|1+a(q_n)z+b(q_n)z^2\big|=1-a(q_n)+b(q_n)> 1-a(r_1)-b(r_1)\left(1-\frac{1}{2  p^{\alpha}}\right) \] as we saw from \eqref{inG}, therefore \eqref{pertcond} holds. Otherwise, since $ \sqrt{1-\frac{a^2}{4b}}>1-\frac{a}{2} $ whenever $b>\frac{a}{4-a}$, we have \begin{align*} \min_{z\in\mathbb{D}}&\big|1+a(q_n)z+b(q_n)z^2\big|=\big(1-b(q_n)\big)\sqrt{1-\frac{a(q_n)^2}{4b(q_n)}} \\& > \big(1-b(q_n)\big)\left(1-\frac{a(q_n)}{2}\right) \geq \big(1-b(r_1)\big)\left(1-\frac{a(r_1)}{2}\right) \\&>  1-a(r_1)-b(r_1)\left(1-\frac{1}{2  p^{\alpha}}\right) , \end{align*} so that once again \eqref{pertcond} holds. This completes the proof of the claim.  \end{proof}

An observation similar to the one given in Remark~\ref{relax} applies in the context of this theorem. That is, the conclusion is still valid, under the less restrictive condition that $\{q_n\}$ is $p$-periodic for $n$ sufficiently large and \[\limsup_{n\to \infty}q_n<r_1(\alpha).\] Moreover, if $q_n=q_{rn}$ for $r$ a composite number, then the theorem applies with $p$ any prime factors of $r$. In that case, a larger region $\mathtt{T}_1$ will be achieved, if we take the smallest prime $p$ among these factor. 

We expect that the claim is still valid for $\sup q_n<\tilde{r}_1(\alpha)$ where the latter is the solution to \[s_\alpha(q)=1+a(q)+b(q)+\min_{z\in \mathbb{D}}\left|1+a(q)z+b(q)z^2\right|.\] However, the verification of this requires knowing sharp bounds for $s_\alpha(r_0(\alpha))$. See Remark~\ref{alpha1} for a development in the case $\alpha=1$. Moreover, this might lead to significant improvements upon the statement above only for $\alpha$ close to 0. The graph shown in Figure~\ref{fig2} (left) confirms this with numerical approximations for $p=3$.

\begin{rmk} \label{remarkTj} By increasing $d$ in Corollary~\ref{BetterthanCristensen}, and developing the argument in the proof of Theorem~\ref{twoterm}, one might obtain an increasing family of nested regions $\mathtt{T}_{d-1}$, where $\{g_n\}$ are Riesz bases, under the periodicity condition $q_n=q_{pn}$. Compare the two graphs shown in Figure~\ref{fig1} for a visualisation of $\mathbb{G}_2$ vs $\mathbb{G}_3$, then see Figure~\ref{fig2} (right). Our numerical simulations suggest that, taking $d=3$, for example, leads to improvements over the region $\mathtt{T}_1$ in a small neighbourhood of $\alpha=0$.     \end{rmk} 


\appendix

\section{Model characterisation of $\mathbb{G}_d$ \label{appa}}
In this appendix we discuss the relation between the hypotheses of the corollaries~\ref{cor2} and \ref{BetterthanCristensen}, with the theory surrounding models and generalised Schur-Cohn theorems. Some of the material presented below is standard, but we believe that Lemma~\ref{polydisctheorem} and Corollary~\ref{mainapp} are new. 

We call a matrix $Y\in\mathbb{C}^{d\times d}$, such that $\|Y\|< 1$, $\operatorname{Spec} Y\subset \mathbb{D}$ and $\operatorname{rank}\big(1-Y^{*}Y\big)=1$, a \emph{Pt\'{a}k-Young matrix}. Up to unitary equivalence, all such matrices are characterised by the upper triangular representation \begin{equation}\label{uppertriangularT} Y=\begin{bmatrix} \beta_{1} & s_{1}s_{2} & -s_{1}\overline{\beta_{2}}s_{2} & \hdots & (-1)^{d}s_{1}\overline{\beta_{2}}\hdots \overline{\beta_{d-1}}s_{d} \\ 0 & \beta_{2} & s_{2}s_{3} & \hdots & (-1)^{d-1}s_{2}\overline{\beta_{3}}\hdots \overline{\beta_{d-1}}s_{d}\\ \vdots & \ddots & \ddots & \ddots  & \vdots \\ \vdots & & \ddots & \beta_{d-1} & s_{d-1}s_d \\ 0 & \hdots & \hdots & 0 & \beta_{d} \\ \end{bmatrix} \end{equation} where $\beta_{j}\in\mathbb{D}$ and $s_{j}=(1-\lvert \beta_{j} \rvert^{2})^{1/2}$ for $j=1,\hdots,d$. See \cite[Remarks 2 and 3]{ptakyoung}. 

Let the scalar polynomial \begin{equation} \label{polp} P(z)=z^{d}+c_{1}z^{d-1}+\hdots+c_{d}\end{equation} with conjugate $Q(z)=z^{d}\overline{P(1/\overline{z})}$ and coefficients vector $\underline{c}=(c_1,\ldots,c_d)$, as in Section~\ref{sec1}. Let $\mathbf{H}:\mathbb{C}^d\longrightarrow \mathbb{R}$ be the Hermitian form given by \begin{equation} \label{rawH}\mathbf{H}(\underline{x}) = \|Q(Y)\underline{x}\|^{2}-\|P(Y)\underline{x}\|^{2},\end{equation} for any $Y\in\mathbb{C}^{d\times d}$ a Pt\'{a}k-Young matrix.  By virtue of the generalised Schur-Cohn theorem obtained in \cite{ptakyoung}, it follows that $\underline{c}\in\mathbb{G}_d$ if and only if $\mathbf{H}$ is positive definite for one (and hence all) such matrix $Y$. If this is the case, then the finite Blaschke product, \begin{equation}\label{blaschkeeq}B(z)=\frac{P(z)}{Q(z)}=\prod_{j=1}^{d}\dfrac{z+\lambda_{j}}{1+\overline{\lambda_{j}}z},\end{equation} is uniformly bounded by 1 on $\operatorname{cl}(\mathbb{D})$. 

Let \[\mathcal{S}(\mathbb{D})=\left\{f:\mathbb{D}\longrightarrow \mathbb{C}\text{ holomorphic }:\,  \sup_{z\in\mathbb{D}}|f(z)|\leq 1 \right\}.\] be the \emph{Schur class of the unit disk}. We say that a {\it{model}} for $f\in\mathcal{S}(\mathbb{D})$, is a pair $(\mathcal{M},u)$ where $\mathcal{M}$ is a separable Hilbert space and $u:\mathbb{D}\longrightarrow \mathcal{M}$ is a holomorphic family, such that \begin{equation}\label{modeleq} 1-\overline{f(w)}f(z) = \Big\langle (1-\overline{w}z)u(z),u(w)\Big\rangle_{\mathcal{M}} \end{equation} for all $w,\,z\in\mathbb{D}$. See \cite{AMYOperatorbook}, also \cite{Agler} where this concept originated. According to the results of the recent work \cite{AELY}, there exists a model formula for functions defined on $\mathbb{D}^{d}$ and such models can be used to study boundary point singularities and directional derivatives of holomorphic functions on $\mathbb{D}^{d}$. See \cite{AglerTullyDoyleYoung} for results on $\mathbb{D}^{2}$. 

The Blaschke product \eqref{blaschkeeq} lies in $\mathcal{S}(\mathbb{D})$ and, according to the representation given in \cite[Lemma 2.47]{AMYOperatorbook}, it has a model. Indeed, for $j=1,\hdots,d$, let \begin{equation}\label{TM} E_{j}(z)=\dfrac{(1-\lvert\lambda_{j}\rvert^{2})^{\frac{1}{2}}}{1+\overline{\lambda_{j}}z}\prod_{k=1}^{j-1}\dfrac{z+\lambda_{k}}{1+\overline{\lambda_{k}}z}.  \end{equation} Then, $\{E_{j}\}_{j=1}^d\subset \mathcal{S}(\mathbb{D})$ is an orthonormal family in $\mathrm{H}^{2}(\mathbb{D};\mathbb{C})$ and \begin{equation}\label{blaschkeproductequation} \begin{aligned} 1-\overline{B(w)}B(z)&=\sum_{j=1}^{d}\overline{E_{j}(w)}(1-\overline{w}z)E_{j}(z) =\Big\langle(1-\overline{w}z)u(z),u(w)\Big\rangle_{\mathbb{C}^{d}} \end{aligned} \end{equation} for all $z,w\in\mathbb{D}$. Therefore, the pair $(\mathcal{M},u)$ with $\mathcal{M}=\mathbb{C}^{d}$ and $u(z) = (E_{1}(z),\hdots,E_{d}(z))$, is indeed a model for the Blaschke product \eqref{blaschkeeq}. 

The functions $E_{j}\in\mathcal{S}(\mathbb{D})$ are often referred-to as the \emph{Takenaka-Malmquist} functions associated to \eqref{blaschkeeq}, \cite{TM}. We now give the expression of the Hermitian form $\mathbf{H}$ in terms of these functions.  

\begin{lem}\label{polydisctheorem} Let the scalar polynomial $P$ be as in the equation \eqref{polp}, with its conjugate $Q$, be such that the coefficients $\underline{c}\in\mathbb{G}_d$. Let $\{E_j\}_{j=1}^d$ be the Takenaka-Malmquist functions associated to the Blaschke product $B=\frac{P}{Q}\in\mathcal{S}(\mathbb{D})$. Then, the Hermitian form in \eqref{rawH} for any fixed Pt\'{a}k-Young matrix $Y$, has the representation \begin{equation}\label{normyequalsinnerprod} \mathbf{H}(\underline{x})=\sum_{j=1}^{d}\big\langle Q^*(Y)E^{*}_{j}(Y)(1-Y^{*}Y)E_{j}(Y)Q(Y)\underline{x},\underline{x}\big\rangle .\end{equation} \end{lem}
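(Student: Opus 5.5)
The plan is to transport the scalar model identity \eqref{blaschkeproductequation} into a matrix identity by evaluating it at the commuting pair $(Y^*,Y)$ through a hereditary functional calculus. Since $\underline{c}\in\mathbb{G}_d$, all zeros of $Q$ lie outside $\operatorname{cl}(\mathbb{D})$ (Section~\ref{sec1}). Since $\operatorname{Spec}Y\subset\mathbb{D}$, the matrix $Q(Y)$ is invertible, and $B(Y)=P(Y)Q(Y)^{-1}$ and $E_j(Y)$ are well defined by the Riesz--Dunford calculus. Moreover, all these matrices are rational functions of $Y$, so they commute with one another and with $Q(Y)$.

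The first step is to rewrite \eqref{blaschkeproductequation} in polarised form. Multiplying by $\overline{Q(w)}Q(z)$ gives
\[\overline{Q(w)}Q(z)-\overline{P(w)}P(z)=\sum_{j=1}^d \overline{E_j(w)Q(w)}\,(1-\overline{w}z)\,E_j(z)Q(z).\]
This is an identity between hereditary polynomials in $(\overline{w},z)$ once $E_jQ$ is recognised as a polynomial. That is true: by \eqref{TM}, $E_jQ=(1-|\lambda_j|^2)^{1/2}\prod_{k<j}(z+\lambda_k)\prod_{k>j}(1+\overline{\lambda_k}z)$.

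The second step is the hereditary calculus. For a hereditary polynomial $h(\overline{w},z)=\sum c_{kl}\overline{w}^k z^l$, set $h(Y^*,Y)=\sum c_{kl}Y^{*k}Y^l$, with the adjoints placed on the left. The map $h\mapsto h(Y^*,Y)$ is linear. A product of the form $\overline{f(w)}\,g(\overline{w},z)\,k(z)$ maps to $f(Y)^*g(Y^*,Y)k(Y)$, and $1-\overline{w}z$ maps to $1-Y^*Y$. Applying this map to both sides of the identity gives
\[Q(Y)^*Q(Y)-P(Y)^*P(Y)=\sum_{j=1}^d (E_jQ)(Y)^*(1-Y^*Y)(E_jQ)(Y).\]

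The third step is to take the quadratic form at $\underline{x}$. The left side becomes $\|Q(Y)\underline{x}\|^2-\|P(Y)\underline{x}\|^2=\mathbf{H}(\underline{x})$. On the right, $(E_jQ)(Y)=E_j(Y)Q(Y)$ because the calculus is multiplicative on functions analytic near $\operatorname{Spec}Y$. This yields exactly \eqref{normyequalsinnerprod}, with $E_j^*(Y)$ read as $E_j(Y)^*$ and $Q^*(Y)$ read as $Q(Y)^*$.

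The main obstacle is making the substitution $\overline{w}\mapsto Y^*$ rigorous. The cleanest route is to avoid analytic functions of $Y^*$ altogether, working only with the polynomial identity of the first step, and to check coefficientwise that the hereditary substitution respects products of the stated form. If the authors intend $E_j^*$ to mean $\overline{E_j(\overline{z})}$, I would instead note that this notation gives $E_j^*(Y^*)=E_j(Y)^*$, and adjust the statement accordingly.

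It is worth noting that the Pták--Young property $\operatorname{rank}(1-Y^*Y)=1$ is not used in this identity. It matters only for deducing positive definiteness of $\mathbf{H}$.
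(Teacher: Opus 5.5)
Your proof is correct and follows essentially the paper's route. The paper applies the functional calculus directly to the model identity \eqref{blaschkeproductequation} to get $I-B(Y)^*B(Y)=\sum_{j=1}^d E_j(Y)^*(I-Y^*Y)E_j(Y)$, then conjugates by $Q(Y)$ using $B(Y)Q(Y)=P(Y)$; you do these two steps in the opposite order, clearing denominators at the scalar level first, so only a hereditary substitution of polynomials is needed and the passage $\overline{w}\mapsto Y^*$ (which the paper glosses as ``Riesz--Dunford'') becomes fully rigorous.
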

\begin{proof} By applying the Riesz-Dunford functional calculus formula, we have that \[I-B^*(Y)B(Y)=\sum_{j=1}^\infty E_j^*(Y)(I-Y^*Y)E_j(Y).\] Then, observe that $Q^*(Y)[I-B^*(Y)B(Y)]Q(Y)=Q^*(Y)Q(Y)-P^*(Y)P(Y)$ and that $\mathbf{H}(\underline{x})=\left\langle \big[ Q^*(Y)Q(Y)-P^*(Y)P(Y)\big]\underline{x},\underline{x}\right\rangle.$  
 \end{proof}

We now give an expression for the operator associated to the Hermitian form \eqref{rawH} in terms of a realisation. As we shall see, we find this realisation by ``re-shuffling'' the model formula in \eqref{modeleq} and appealing to the classical ``lurking isometry'' argument. Cf. \cite[Section 2.4]{AMYOperatorbook}. The next statement is our main contribution in this appendix.

\begin{cor} \label{mainapp} Let $Y$ be a Pt\'{a}k-Young matrix on $\mathbb{C}^{d}$.   Let \[u(Y)=(E_{1}(Y),\ldots,E_{d}(Y))\in\mathcal{B}(\mathbb{C}^{d^{2}})\] where $\{E_{j}\}_{j=1}^{d}$ are the Takenaka-Malmquist functions associated to the finite Blaschke product $B=\frac{P}{Q}$ in \eqref{blaschkeeq}. Then, there exists $H(Y)\in\mathcal{B}(\mathbb{C}^{d^{2}},\mathbb{C}^{d})$ such that \[\mathbf{H}(\underline{x})=\|H(Y)u(Y)Q(Y)\underline{x}\|^2,\] for all $\underline{x}\in\mathbb{C}^{d}$. \end{cor}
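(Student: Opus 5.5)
Starting from Lemma~\ref{polydisctheorem}, we have
\[\mathbf{H}(\underline{x})=\sum_{j=1}^{d}\big\langle (I-Y^*Y)E_j(Y)Q(Y)\underline{x},E_j(Y)Q(Y)\underline{x}\big\rangle.\]
Because $Y$ is a Pt\'{a}k-Young matrix, $I-Y^*Y$ is positive semidefinite and has rank one. With $\|Y\|<1$ it is in fact strictly positive on its range, so we can write $I-Y^*Y=v v^*$ for some $v\in\mathbb{C}^d$. Equivalently, $(I-Y^*Y)^{1/2}$ is the rank-one operator $\|v\|^{-1}vv^*$. Substituting this factorisation, each summand becomes
\[\|(I-Y^*Y)^{1/2}E_j(Y)Q(Y)\underline{x}\|^2=|v^*E_j(Y)Q(Y)\underline{x}|^2 .\]
Hence $\mathbf{H}(\underline{x})$ is a sum of $d$ squared moduli of linear functionals applied to $E_j(Y)Q(Y)\underline{x}$.

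Next I assemble the operator. Interpret $u(Y)$ as the column operator $\mathbb{C}^d\to\mathbb{C}^{d^2}=\bigoplus_{j=1}^d\mathbb{C}^d$ given by $\underline{y}\mapsto(E_1(Y)\underline{y},\ldots,E_d(Y)\underline{y})$. Define $H(Y):\mathbb{C}^{d^2}\to\mathbb{C}^d$ on block vectors by
\[H(Y)(\underline{y}_1,\ldots,\underline{y}_d)=\big((I-Y^*Y)^{1/2}\underline{y}_1\big)_1\oplus\cdots,\]
or, more concretely,
\[H(Y)(\underline{y}_1,\ldots,\underline{y}_d)=\Big(\tfrac{v^*\underline{y}_1}{1},\ldots,v^*\underline{y}_d\Big)\in\mathbb{C}^d.\]
In other words, $H(Y)=\operatorname{diag}(v^*,\ldots,v^*)$, which is block diagonal with $d$ copies of the row $v^*$. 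This is a bounded map $\mathbb{C}^{d^2}\to\mathbb{C}^d$. It satisfies
\[\|H(Y)u(Y)Q(Y)\underline{x}\|^2=\sum_j|v^*E_j(Y)Q(Y)\underline{x}|^2=\mathbf{H}(\underline{x}),\]
which is the claimed identity. Note that $\mathbb{C}^d$ is used here as the target precisely because rank one makes each block contribute a single scalar coordinate.

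To match the paper's lurking-isometry flavour, I would add a remark motivating the construction. Apply the model identity \eqref{blaschkeproductequation} with $z,w$ replaced by $Y$ via the functional calculus, which is legitimate since $\operatorname{Spec}Y\subset\mathbb{D}$. The factor $1-\overline{w}z$ then becomes the kernel $I-Y^*Y$, whose square root is what $H(Y)$ implements blockwise.

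The only delicate point is making precise the meaning of $u(Y)\in\mathcal{B}(\mathbb{C}^{d^2})$ in the statement. I would take it as the column map above, or, if a square operator is intended, pad it with zeros and compose it with the inclusion $\mathbb{C}^d\hookrightarrow\mathbb{C}^{d^2}$. The remaining work is to check that each $E_j(Y)$ is well defined. This holds because the poles $-1/\overline{\lambda_j}$ lie outside $\operatorname{cl}(\mathbb{D})$ when $\underline{c}\in\mathbb{G}_d$.
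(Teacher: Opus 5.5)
Your proof is correct, and it is more direct than the paper's. The paper sets $\underline{z}=Q(Y)\underline{x}$ and rewrites Lemma~\ref{polydisctheorem} as $\mathbf{H}(\underline{x})=\|u(Y)\underline{z}\|^2-\|(I\otimes Y)u(Y)\underline{z}\|^2$. It then observes that the families $(\underline{y},(I\otimes Y)u(Y)\underline{z})$ and $(\underline{0},u(Y)\underline{z})$ have equal Gramians and applies the lurking isometry argument. The resulting isometry extends to a unitary on $\mathbb{C}^d\oplus\mathbb{C}^{d^2}$, and $H(Y)$ is read off as its $(1,2)$ block, so that $\underline{y}=H(Y)u(Y)\underline{z}$.

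You instead factor the rank-one defect as $I-Y^*Y=vv^*$ and write down $H(Y)=\operatorname{diag}(v^*,\ldots,v^*)$. The identity then follows in one line.

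\emph{What each route buys.} Your version gives an explicit $H(Y)$ and shows exactly where the rank-one hypothesis enters: each block collapses to a single scalar, which is why the target is $\mathbb{C}^d$. It also makes explicit a step the paper leaves tacit. The Gramian-matching argument needs a $\underline{y}$ depending linearly on $\underline{z}$ with the right inner products, and that is itself a square-root factorisation of the form, which is what you write down. The paper's version produces $H(Y)$ only non-constructively. In exchange, it embeds $H(Y)$ in a unitary colligation, which is the natural object if one wants a realisation formula rather than just the norm identity.

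\emph{Two minor corrections.} Neither affects validity.

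First, positivity of $I-Y^*Y$ comes from $\|Y\|\le 1$. The strict inequality $\|Y\|<1$ in the paper's definition is incompatible with $\operatorname{rank}(I-Y^*Y)=1$ once $d\ge 2$. For example, when $d=2$, taking $\beta_1=\beta_2=0$ in \eqref{uppertriangularT} gives a matrix of norm one. So your sentence about strict positivity on the range should simply say that a rank-one positive semidefinite matrix has the form $vv^*$.

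Second, in the paper $u(Y)$ acts as the block-diagonal operator with entries $E_j(Y)$ on the diagonal copy $(\underline{z},\ldots,\underline{z})$, not on a zero-padded vector. This gives the same vector in $\mathbb{C}^{d^2}$ as your column map, so your $H(Y)$ works under either reading.
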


 \begin{proof}Let $\underline{z}=Q(Y)\underline{x}$ and prescribe $\|\underline{y}\|^2=\mathbf{H}(\underline{x})$ for conditions on $\underline{y}$ to be found. By virtue of \eqref{normyequalsinnerprod}, \[\|\underline{y}\|^2=\sum_{j=1}^d\langle (I-Y^*Y)E_j(Y)\underline{z},E_j(Y)\underline{z}\rangle.\] Then, \begin{align*} \|\underline{y}\|^2&=\left\langle \begin{bmatrix} (I-Y^{*}Y)E_{1}(Y) \underline{z} \\ \vdots \\ (I-Y^{*}Y)E_{d}(Y) \underline{z} \end{bmatrix}, \begin{bmatrix} E_{1}(Y)\underline{z} \\ \vdots \\ E_{d}(Y)\underline{z} \end{bmatrix} \right\rangle_{\mathbb{C}^{d^{2}}} \\ &=\|u(Y)\underline{z}\|^2-\|(I\otimes Y)u(Y)\underline{z}\|^2,  \end{align*} where $u(Y)$ is as in the hypothesis. Indeed, note that \[(I\otimes Y)u(Y) \begin{bmatrix}\underline{z} \\ \vdots \\ \underline{z} \end{bmatrix} = \begin{bmatrix} Y &  \hdots & 0\\ \vdots & \ddots & \vdots\\ 0 & \hdots & Y \end{bmatrix}\begin{bmatrix} E_{1}(Y) \underline{z}\\ \vdots\\ E_{d}(Y) \underline{z} \end{bmatrix} = \begin{bmatrix} YE_{1}(Y)\underline{z}\\ \vdots\\ YE_{d}(Y)\underline{z} \end{bmatrix}.\] 

Write $\underline{0}=(0,\ldots,0)\in\mathbb{C}^{d}$. Then, \[\left\|\begin{bmatrix} \underline{y} \\ (I\otimes Y)u(Y)\underline{z} \end{bmatrix} \right\|_{\mathbb{C}^{d}\oplus\mathbb{C}^{d^{2}}} = \left\|\begin{bmatrix} \underline{0}\\ u(Y)\underline{z} \end{bmatrix}\right\|_{\mathbb{C}^{d}\oplus\mathbb{C}^{d^{2}}}.  \] From this, it follows that, the two families of vectors \[\begin{bmatrix} \underline{y} \\ (I\otimes Y)u(Y)\underline{z} \end{bmatrix}~~\text{and}~~\begin{bmatrix} \underline{0}\\ u(Y)\underline{z} \end{bmatrix}\] have the same Gramians in $\mathbb{C}^{d}\oplus\mathbb{C}^{d^{2}}$. Hence, by the lurking isometry argument, \cite[Section 2.4]{AMYOperatorbook}, there exists a linear isometry $L(Y)$ such that \[L(Y):\mathrm{Span}\bigg\{\begin{bmatrix} \underline{0}\\ u(Y)\underline{z} \end{bmatrix} : \underline{z}\in\mathbb{C}^{d}\bigg\}\longrightarrow \mathrm{Span}\bigg\{\begin{bmatrix} \underline{y} \\ (I \otimes Y)u(Y)\underline{z} \end{bmatrix} : \underline{z}\in\mathbb{C}^{d}\bigg\}\]  and \[L(Y)\begin{bmatrix} \underline{0}\\ u(Y)\underline{z} \end{bmatrix} = \begin{bmatrix} \underline{y} \\ (I\otimes Y)u(Y)\underline{z} \end{bmatrix} \] for all $\underline{z}\in\mathbb{C}^{d}$. The isometry $L(Y)$ extends to a unitary operator on $\mathbb{C}^d\oplus \mathbb{C}^{d^2}$, with block matrix form \[\begin{bmatrix} F(Y) & H(Y) \\ G(Y) & K(Y)\end{bmatrix}.\] Then, indeed $\underline{y} =H(Y)u(Y)\underline{z}$ for  suitable $H(Y)\in \mathcal{B}(\mathbb{C}^{d^2},\mathbb{C}^d)$ as required. \end{proof}

A comment about the other hypotheses of Corollary~\ref{BetterthanCristensen} is now in place. We saw that the positivity of the Hermitian form $\mathbf{H}$ characterises the polydisk $\mathbb{G}_d$. Thus, it might be natural to ask whether the infimum on the right hand side of the condition \eqref{pertcond} could also be described through $\mathbf{H}$. Unfortunately, simple examples with $d=2$ show that this is not the case.

\bibliographystyle{amsplain}
\bibliography{bib}

\vfill

\noindent Lyonell Boulton (\texttt{l.boulton@hw.ac.uk})

\noindent Department of Mathematics and Maxwell Institute for Mathematical Sciences, 
Heriot-Watt University, Edinburgh EH14 4AS, UK.

\

\noindent Connor Evans (\texttt{CEvansMathematics@outlook.com})

\noindent North Brent School, London NW10 2UF, UK. 

\noindent Part of this work was conducted while this author was a PhD student at the University of Newcastle upon Tyne. EPSRC grant \textbf{DTR~21~EP/T517914/1}.

\end{document}